\theoremstyle{plain}
\newtheorem{thm}{Theorem}[section]
\newtheorem{prop}{Proposition}[section]
\newtheorem{lem}[prop]{Lemma}
\newtheorem{cor}[prop]{Corollary}
\newtheorem*{thm*}{Theorem}
\theoremstyle{definition}
\newtheorem{rmk}[prop]{Remark}
\numberwithin{equation}{section}
\newcommand {\R} {\mathbb{R}}
\newcommand {\p} {\partial}
\newcommand{\eps}{\varepsilon}
\newcommand {\supp} {\text{supp}}
\newcommand{\ol}{\overline}
\newcommand{\tbl}{\textcolor{blue}}
\newcommand{\mR}{\mathbb{R}}                    
\newcommand{\mC}{\mathbb{C}}                    
\newcommand{\abs}[1]{\lvert #1 \rvert}          
\newcommand{\norm}[1]{\left\Vert #1 \right\Vert} 
\newcommand{\re}{\mathrm{Re}}
\newcommand{\im}{\mathrm{Im}}
\definecolor{armygreen}{rgb}{0.29, 0.33, 0.13}
\definecolor{ao(english)}{rgb}{0.0, 0.5, 0.0}
\title[Inverse problems for semilinear elliptic PDE]{Inverse problems for semilinear elliptic PDE with measurements at a single point}
\author[Salo]{Mikko Salo}
\address{Department of Mathematics and Statistics, University of Jyvaskyla, Jyvaskyla, Finland}
\curraddr{}
\email{mikko.j.salo@jyu.fi}
\author[Tzou]{Leo Tzou}
\address{School of Mathematics and Statistics, University of Sydney, Australia}
\curraddr{}
\email{leo.tzou@gmail.com}
\begin{document}

\maketitle
\begin{abstract}
We consider the inverse problem of determining a potential in a semilinear elliptic equation from the knowledge of the Dirichlet-to-Neumann map. For bounded Euclidean domains we prove that the potential is uniquely determined by the Dirichlet-to-Neumann map measured at a single boundary point, or integrated against a fixed measure. This result is valid even when the Dirichlet data is only given on a small subset of the boundary. We also give related uniqueness results on Riemannian manifolds.

	
	
\end{abstract}

\section{Introduction}\label{Sec 1}

In this article we study inverse problems for semilinear elliptic equations, with measurements given by the nonlinear Dirichlet-to-Neumann map (DN map) measured at a single point or integrated against a fixed measure. The method is based on higher order linearizations of the DN map. This method was introduced in inverse problems for hyperbolic PDE in \cite{KLU2018} where a source-to-solution map was used. It was observed in \cite{LLPT} that in the hyperbolic case it may be sufficient to measure a DN map integrated against a suitable fixed function. The work \cite{TzouSingle} proved a result showing that measurements of the source-to-solution map at a single point suffice. 

The higher order linearization method in inverse problems for nonlinear elliptic PDE was introduced independently in \cite{FO20} and \cite{LLLS2019nonlinear}. We note that the first linearization has been used extensively since the work \cite{isakov1993uniqueness_parabolic}, and the second linearization had also been used in \cite{sun1996quasilinear,sun1997inverse,KN002,CNV2019reconstruction,AYT2017direct}. The works \cite{LLLS2019partial,KU2019partial,KU2019remark} studied related inverse problems for semilinear elliptic equations with partial data, with \cite{LLST} addressing fractional power nonlinearities. 
In \cite{LZ2020partial,krupchyk2020inverse,carstea2020inverse,kian2020partial, CFKKU} the authors study nonlinear conductivity or magnetic Schr\"odinger type equations. All these results use the nonlinear DN map with data given on open subsets of the boundary.

In this note we observe that in some of the elliptic results above it is enough to measure the DN map at a single point, or integrated against a fixed measure. Let $\Omega \subset \R^n$, $n \geq 2$, be a bounded domain with $C^\infty$ boundary, and let $m \geq 2$ be an integer. Consider the semilinear elliptic equation 
\begin{align}\label{Main equation}
	\left\lbrace \begin{array}{rl}
	\Delta u + q(x) u^m =0 & \text{ in }\Omega, \\[3pt]
	u=f & \text{ on }\p \Omega,
	\end{array} \right.
\end{align}
where $q\in C^\alpha(\overline{\Omega})$ is a potential, and $C^\alpha$ with $0 < \alpha < 1$ denotes the space of $\alpha$-H\"older continuous functions. Let $f \in U_{\delta}$, where 
\[
U_{\delta} := \{ f \in C^{2,\alpha}(\p \Omega) \,:\, \norm{f}_{C^{2,\alpha}(\p \Omega)} < \delta \}.
\]
If $\delta > 0$ is small enough there is a unique small solution $u = u_f \in C^{2,\alpha}(\ol{\Omega})$ of \eqref{Main equation}, see e.g.\ \cite[Proposition 2.1]{LLST}. One can then define the corresponding nonlinear DN map $\Lambda_q$ by 
\begin{align*}
	\Lambda_q: U_{\delta} \to C^{1,\alpha}(\p \Omega),\qquad  f \mapsto \left. \p_\nu u _f \right|_{\p \Omega},
\end{align*}
where $\p_\nu$ denotes the normal derivative on $\p \Omega$. In \cite{FO20, LLLS2019nonlinear} it was proved that the full DN map $\Lambda_q$ uniquely determines $q$. This was extended in \cite{KU2019remark,LLLS2019partial} to the case where one knows $\Lambda_q(f)|_{\Gamma_1}$ for $f$ supported in $\Gamma_2$ where $\Gamma_1, \Gamma_2 \subset \p \Omega$ are open sets.

We show that it is enough to measure $\int_{\p \Omega} \Lambda_q(f) \,d\mu$ for a fixed measure $\mu$ on $\p \Omega$. When $\mu = \delta_{x_0}$ this corresponds to measurements at a fixed point.

\begin{thm} \label{Main Thm 1}
Let $\Omega \subset \R^n$, $n \geq 2$, be a connected bounded open set with $C^\infty$ boundary, let $m \geq 2$ be an integer, and let $\Gamma \subset \p \Omega$ be a nonempty open set. Suppose that $\mu \not\equiv 0$ is a fixed measure on $\p \Omega$. If $q_1, q_2 \in C^\alpha (\overline{\Omega})$ for some $0<\alpha<1$ satisfy  
\begin{equation} \label{dnmap_measure_assumption}
\int_{\p \Omega} \Lambda_{q_1}(f) \,d\mu = \int_{\p \Omega }\Lambda_{q_2}(f) \,d\mu
\end{equation}
for all $f\in U_{\delta}$ with $\mathrm{supp}(f) \subset \Gamma$ where $\delta >0$ is sufficiently small, then 
\begin{align*}
 q_1 =q_2 \text{ in }\Omega.
\end{align*}
In particular, choosing $\mu = \delta_{x_0}$ for some fixed $x_0 \in \p \Omega$, we see that the condition 
\[
 \Lambda_{q_1}(f)(x_0) = \Lambda_{q_2}(f)(x_0) \qquad \text{for all $f\in U_{\delta}$ with $\mathrm{supp}(f) \subset \Gamma$}
\]
implies that $q_1 = q_2$.
\end{thm}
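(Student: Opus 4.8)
The plan is to use the higher order linearization method applied to the nonlinear DN map. Since $m \geq 2$, differentiating the equation \eqref{Main equation} with respect to a parameter and evaluating at zero boundary data will generate linear equations whose source terms involve products of solutions to $\Delta v = 0$. Concretely, plug in $f = \sum_{j=1}^m \epsilon_j f_j$ with $\mathrm{supp}(f_j) \subset \Gamma$, write $u_f$ as a power series in the $\epsilon_j$, and apply the mixed derivative $\partial_{\epsilon_1} \cdots \partial_{\epsilon_m}|_{\epsilon = 0}$. The first-order terms $v_j := \partial_{\epsilon_j} u_f|_{\epsilon=0}$ solve $\Delta v_j = 0$ in $\Omega$ with $v_j = f_j$ on $\partial\Omega$, and the $m$-th order term $w := \partial_{\epsilon_1}\cdots\partial_{\epsilon_m} u_f|_{\epsilon=0}$ satisfies
\begin{align*}
\Delta w = -m!\, q\, v_1 \cdots v_m \text{ in }\Omega, \qquad w = 0 \text{ on }\partial\Omega.
\end{align*}
Applying the same differentiation to the hypothesis \eqref{dnmap_measure_assumption} and using that the first $m-1$ linearizations of $\Lambda_{q_1}$ and $\Lambda_{q_2}$ vanish (their source terms are zero since fewer than $m$ factors appear), one obtains $\int_{\partial\Omega} \partial_\nu(w_1 - w_2)\, d\mu = 0$ where $w_i$ is the $m$-th order term for $q_i$.

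Next I would convert this boundary identity into an interior orthogonality relation. Set $r = q_1 - q_2$ and $w = w_1 - w_2$; then $\Delta w = -m!\, r\, v_1\cdots v_m$ in $\Omega$ with $w|_{\partial\Omega} = 0$, so $\partial_\nu w|_{\partial\Omega}$ is determined by $r$ and the $v_j$. The twist compared to the standard argument is that we only know $\int_{\partial\Omega} \partial_\nu w\, d\mu = 0$ rather than $\partial_\nu w|_{\partial\Omega} = 0$ pointwise, so we cannot simply pair against an arbitrary harmonic function by Green's identity. Instead, introduce the solution $\psi$ of the adjoint problem: let $\psi$ solve $\Delta \psi = 0$ in $\Omega$ with boundary data chosen so that, via Green's formula, $\int_{\partial\Omega} \partial_\nu w\, d\mu$ equals $m!\int_\Omega r\, v_1 \cdots v_m\, \psi\, dx$ up to boundary terms. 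The cleanest route is to take $\psi$ to be a single fixed function — e.g. a solution of $\Delta\psi = 0$ whose Cauchy data realizes the functional $g \mapsto \int_{\partial\Omega} g\, d\mu$ — but since $\mu$ is a general measure one should instead solve $\Delta \psi = \mu$ (interpreting $\mu$ as a distribution on a neighborhood, or work with the Poisson-type representation), so that $\int_{\partial\Omega} \partial_\nu w\, d\mu = \langle \mu, \partial_\nu w \rangle$ can be rewritten using Green's identity for the pair $(w, \psi)$ as $m! \int_\Omega r\, v_1\cdots v_m\, \psi\, dx = 0$. This yields: for all harmonic $v_1, \dots, v_m$ in $\Omega$ with boundary traces supported in $\Gamma$,
\begin{align*}
\int_\Omega r(x)\, v_1(x) \cdots v_m(x)\, \psi(x)\, dx = 0,
\end{align*}
where $\psi$ is a fixed nonzero function ($\psi \not\equiv 0$ because $\mu \not\equiv 0$), harmonic in $\Omega \setminus \mathrm{supp}(\mu)$, hence real-analytic there.

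Finally I would extract $r \equiv 0$ from this integral identity. Using that products of harmonic functions vanishing on $\partial\Omega\setminus\Gamma$ are dense — or more precisely, using the standard density argument with complex geometric optics / Calderón-type exponential solutions, or with the harmonic functions of the form used in \cite{LLLS2019partial} for the partial data case with $m \geq 2$ — one shows that $\int_\Omega r\, \psi\, v\, dx = 0$ for a large enough class of harmonic functions $v$ (taking, say, $v_2 = \cdots = v_m = 1$ is not allowed since $1$ need not have trace supported in $\Gamma$, so one keeps all $m$ factors and uses the flexibility of choosing products; the partial-data higher-linearization machinery handles exactly this). This forces $r\psi = 0$ in $\Omega$. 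Since $\psi$ is real-analytic and not identically zero on the open set $\Omega \setminus \mathrm{supp}(\mu)$, its zero set there has measure zero, so $r = 0$ on $\Omega \setminus \mathrm{supp}(\mu)$, and if $\mathrm{supp}(\mu) \cap \Omega$ has empty interior (which holds since $\mu$ is a measure on $\partial\Omega$) we get $r = 0$ a.e., hence $q_1 = q_2$ by continuity.

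The main obstacle I anticipate is the second step: correctly setting up the auxiliary function $\psi$ and justifying the Green's identity manipulation when $\mu$ is a general boundary measure (so $\psi$ solves a problem with measure data and is only harmonic away from $\mathrm{supp}(\mu)$), and then ensuring the density/completeness argument in the last step still goes through with the extra fixed weight $\psi$ present and with all $m$ harmonic factors constrained to have boundary support in $\Gamma$. The interior analyticity of $\psi$ is what saves the final step, but one must be careful that $\mathrm{supp}(\mu) \subset \partial\Omega$ so that $\psi$ is analytic on all of $\Omega$, making the argument clean.
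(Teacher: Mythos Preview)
Your outline matches the paper's approach: higher order linearization, an auxiliary harmonic function carrying the measure $\mu$ as boundary data, an interior orthogonality relation, and a density argument. Two points, however, are left genuinely unresolved in your proposal, and they are exactly where the work lies.

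First, the auxiliary function. You oscillate between ``$\Delta\psi=0$ with boundary data realizing $\mu$'' and ``$\Delta\psi=\mu$''; the former is correct. The paper makes this precise: $\Psi(x)=\int_{\partial\Omega}P(x,y)\,d\mu(y)$ is the Poisson integral of $\mu$, it is harmonic in all of $\Omega$, and (this is the key regularity fact you omit) it lies only in $L^r(\Omega)$ for $r<\frac{n}{n-1}$, not in $L^\infty$. The weak boundary condition is formulated as $\int_{\partial\Omega}\partial_\nu w\,d\mu=\int_\Omega(\Delta w)\Psi\,dx$ for $w\in C^2(\overline\Omega)$ with $w|_{\partial\Omega}=0$, which immediately turns your boundary identity into $\int_\Omega(q_1-q_2)v_1\cdots v_m\,\Psi\,dx=0$.

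Second, and more seriously, the density step. You correctly note that one cannot set $v_2=\cdots=v_m=1$, and you invoke ``the partial-data higher-linearization machinery'', but that machinery is built for $L^\infty$ weights, whereas here the weight contains $\Psi\in L^r$ with $r$ close to $1$. The paper handles this in two moves: (i) fix $h_3,\ldots,h_m\geq 0$, nontrivial, supported in $\Gamma$, so that by the strong maximum principle $v_3,\ldots,v_m>0$ in $\Omega$; then only $v_1,v_2$ vary and the problem reduces to the \emph{two-factor} linearized partial-data Calder\'on problem with unknown $(q_1-q_2)v_3\cdots v_m\Psi$; (ii) since this unknown is only in $L^1(\Omega)$, one needs the $L^1$ extension of the result of Dos Santos Ferreira--Kenig--Sj\"ostrand--Uhlmann (stated as Theorem~\ref{thm_linearized_lone} in the paper). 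This gives $(q_1-q_2)\Psi=0$, and then harmonicity plus unique continuation of $\Psi$ finish as you indicate. Without (i) and (ii) the argument does not close; you flagged the obstacle but did not supply these two ingredients.
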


We can give a similar result for semilinear elliptic PDE on manifolds. Let $(M,g)$ be a compact Riemannian manifold with smooth boundary, let $q \in C^{\infty}(M)$, and let $m \geq 2$. We consider the Dirichlet problem 
\begin{align}\label{eq_manifold}
	\left\lbrace \begin{array}{rl}
	\Delta_g u + q(x) u^m =0 & \text{ in }M, \\[3pt]
	u=f & \text{ on }\p M.
	\end{array} \right.
\end{align}
Again, if $U_{\delta} := \{ f \in C^{2,\alpha}(\p M) \,:\, \norm{f}_{C^{2,\alpha}(\p M)} < \delta \}$, then for any $f \in U_{\delta}$ with $\delta$ small enough the Dirichlet problem has a unique small solution $u \in C^{2,\alpha}(M)$ (see e.g.\ \cite[Proposition 2.1]{LLLS2019nonlinear}). We may define the DN map 
\begin{align*}
	\Lambda_q: U_{\delta} \to C^{1,\alpha}(\p M),\qquad  f \mapsto \left. \p_\nu u _f \right|_{\p M},
\end{align*}
where $\p_{\nu}$ denotes the normal derivative with respect to the metric $g$ on $\p M$. We have the following result where $f$ can be supported on all of $\p M$, but we only measure the DN map at a single point or integrated against a fixed measure.

\begin{thm} \label{thm2}
Let $(M,g)$ be a compact Riemannian $n$-manifold with smooth boundary, let $m \geq 2$ be an integer, and let $\mu \not\equiv 0$ be a fixed measure on $\p M$. Assume that one of the following conditions is satisfied:
\begin{enumerate}
\item 
$(M,g)$ is transversally anisotropic as in \cite[Definition 1.1]{LLLS2019nonlinear}, and $m \geq 4$; or 
\item 
$(M,g)$ is a complex manifold satisfying the conditions in \cite[Theorem 1.4]{GuillarmouSaloTzou}.
\end{enumerate}
If $q_1, q_2 \in C^{\infty} (M)$ are such that  $q_1 = q_2$ to infinite order on $\p M$ and 
\begin{equation} \label{dnmap_measure_assumption2}
\int_{\p \Omega} \Lambda_{q_1}(f) \,d\mu = \int_{\p \Omega }\Lambda_{q_2}(f) \,d\mu
\end{equation}
for all $f\in U_{\delta}$ where $\delta >0$ is sufficiently small, then $q_1 = q_2$ in $M$.
\end{thm}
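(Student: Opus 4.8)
The plan is to use the higher order linearization method adapted to the single-measurement setting. Write $u_f = u_f^{(\epsilon)}$ for the solution with boundary data $\epsilon_1 f_1 + \dots + \epsilon_m f_m$, where $f_1, \dots, f_m$ are fixed boundary functions (supported in $\Gamma$ in the Euclidean case), and differentiate the equation \eqref{Main equation} with respect to $\epsilon_1 \cdots \epsilon_m$ at $\epsilon = 0$. The first linearizations $v_j := \p_{\epsilon_j} u_f|_{\epsilon=0}$ solve the linear homogeneous equation $\Delta v_j = 0$ (resp.\ $\Delta_g v_j = 0$) with $v_j|_{\p\Omega} = f_j$, and the $m$-th order mixed derivative $w := \p_{\epsilon_1}\cdots\p_{\epsilon_m} u_f|_{\epsilon=0}$ solves
\begin{equation*}
\Delta w = -m!\, q \, v_1 v_2 \cdots v_m \text{ in }\Omega, \qquad w|_{\p\Omega} = 0.
\end{equation*}
The standard integral identity, obtained by pairing this equation with a harmonic function $v_0$ (with $v_0|_{\p\Omega} = f_0$) and integrating by parts, reads $\int_{\p\Omega} (\p_\nu w) f_0 \,dS = m! \int_\Omega (q_1 - q_2) v_1 \cdots v_m v_0 \,dx$ when applied to the difference of the two potentials. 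The difficulty is that we do not know $\p_\nu w$ pointwise; we only know $\int_{\p\Omega} \p_\nu w \, d\mu$, i.e.\ the pairing against the \emph{fixed} measure $\mu$ rather than against an arbitrary harmonic function $v_0$.

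To get around this, the key trick is to put the unknown ``test function'' on the $v_0$ slot into the measure instead. Concretely, let $\phi$ be the (unique, bounded) solution of the adjoint problem $\Delta \phi = 0$ in $\Omega$ with $\phi|_{\p\Omega} = $ (a density representing) $\mu$ — or more carefully, since $\mu$ is only a measure, let $\phi$ solve $\Delta\phi = \mu$-type data in the distributional sense, i.e.\ $\phi$ is the harmonic function whose normal-derivative pairing reproduces $\int_{\p\Omega}(\cdot)\,d\mu$. Then differentiating \eqref{dnmap_measure_assumption} in $\epsilon_1\cdots\epsilon_m$ and using the divergence theorem converts the left side into $m!\int_\Omega (q_1-q_2) v_1\cdots v_m \phi \,dx$ plus, if one works on the difference of solutions for $q_1$ and $q_2$, lower-order terms that vanish because lower linearizations coincide (they only see the linear equation, which is the same for both potentials). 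So the measurement hypothesis yields
\begin{equation*}
\int_\Omega (q_1 - q_2)\, \phi\, v_1 v_2 \cdots v_m \,dx = 0
\end{equation*}
for all harmonic $v_1, \dots, v_m$ with the appropriate support condition on their traces. Here $\phi$ is a single fixed function, not at our disposal.

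The remaining task is a density/completeness argument: show that the products $v_1 v_2 \cdots v_m$ of harmonic functions (with traces supported in $\Gamma$) span a dense enough set that multiplication by the fixed weight $(q_1-q_2)\phi$ being orthogonal to all of them forces $(q_1-q_2)\phi \equiv 0$, and then conclude $q_1 = q_2$ on the set where $\phi \neq 0$, finally propagating to all of $\Omega$. For the Euclidean case this uses products of harmonic functions built from complex exponentials / plane waves, or the Kashiwara–Stokes / Rüland–Salo-type density results, exploiting that $m \geq 2$ gives at least a product of two factors so one recovers full-frequency information; the support restriction to $\Gamma$ is handled by Runge approximation as in \cite{LLLS2019partial,KU2019remark}. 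Since $\phi$ is harmonic and not identically zero, it is nonzero on an open dense subset, so $(q_1-q_2)\phi = 0$ gives $q_1 = q_2$ on that set, hence everywhere by continuity. On manifolds the same scheme applies but the density of products of harmonic functions is replaced by the Gaussian beam / complex geometric optics constructions available under the transversally anisotropic hypothesis (using $m \geq 4$ so that enough factors are available to localize along geodesics, cf.\ \cite{LLLS2019nonlinear}) or under the complex-manifold hypothesis of \cite{GuillarmouSaloTzou}; the condition $q_1 = q_2$ to infinite order on $\p M$ is used to start the boundary determination and to handle the behaviour of $\phi$ near $\p M$.

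The main obstacle I expect is the construction and control of the fixed weight $\phi$ when $\mu$ is a general measure (in particular $\mu = \delta_{x_0}$): one must make sense of the harmonic extension of a Dirac mass, which is only in $L^p$ for $p < n/(n-1)$ and not bounded, and verify that the integration-by-parts identity and the subsequent density argument still go through with such low regularity — i.e.\ that $(q_1-q_2)\phi \, v_1\cdots v_m$ is integrable and that the vanishing of all these integrals still forces $(q_1-q_2)\phi = 0$ a.e. This is where care with function spaces is needed, but it is manageable because the $v_j$ and $q_i$ are Hölder continuous, so the only singularity is the mild one of $\phi$ at a single boundary point, integrable against bounded functions in dimension $n \geq 2$.
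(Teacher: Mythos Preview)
Your outline is correct and follows the same route as the paper: higher order linearization produces $\int_M (q_1-q_2)\,\Psi\, v_1\cdots v_m\,dV_g=0$ with $\Psi$ the harmonic extension of $\mu$ (in $L^r$ for $r<\tfrac{n}{n-1}$ via Poisson kernel estimates), then a density result for products of harmonic functions forces $(q_1-q_2)\Psi=0$, and unique continuation for $\Psi$ gives $q_1=q_2$.

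One point deserves sharpening. You describe the condition $q_1=q_2$ to infinite order on $\partial M$ as being used ``to start the boundary determination,'' and you suggest that mere integrability of $(q_1-q_2)\Psi$ is enough for the density step. Neither is quite right. There is no boundary determination here; the role of the infinite-order vanishing is purely to upgrade regularity: since $\Psi$ blows up like $d_g(x,\partial M)^{-(n-1)}$ near $\mathrm{supp}(\mu)$, multiplying by $(q_1-q_2)$, which vanishes to all orders at $\partial M$, kills this singularity and makes $f:=(q_1-q_2)\Psi$ genuinely $C^\infty$ on $M$, vanishing to infinite order on $\partial M$. This matters because the density results you invoke on manifolds (\cite[Proposition~5.1]{LLLS2019nonlinear} in the transversally anisotropic case and \cite[Theorem~1.4]{GuillarmouSaloTzou} in the complex case) are stated for smooth $f$, not for $L^1$ functions; the Gaussian beam / stationary phase arguments behind them need that regularity. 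So in the manifold setting, unlike the Euclidean Theorem~\ref{thm_linearized_lone}, integrability alone would not close the argument---the infinite-order hypothesis is exactly what bridges the gap.
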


The proofs of Theorems \ref{Main Thm 1}--\ref{thm2} are based on the higher order linearization method in \cite{FO20, LLLS2019nonlinear}. From \cite[Proposition 2.2]{LLLS2019nonlinear} one obtains the identity 
\begin{multline}
\int_{\p M} ((D^m \Lambda_{q_1})_0 - (D^m \Lambda_{q_2})_0)(f_1, \ldots, f_m) f_{m+1} \,dS \\
 = -(m!) \int_M (q_1-q_2) v_1 \cdots v_{m+1} \,dV \label{dmlambda_identity}
\end{multline}
where $(D^m \Lambda_q)_0$ denotes the $m$th Fr\'echet derivative on $\Lambda_q$ at $0$ considered as an $m$-linear form, $f_j$ are Dirichlet data, and $v_j$ are solutions of the linearized equation $\Delta_g v_j = 0$ in $M$ with $v_j|_{\p M} = f_j$. The single point measurement case formally corresponds to choosing $f_{m+1} = \delta_{x_0}$ with $x_0 \in \p M$. The corresponding solution $v_{m+1}$ is in $L^1(\Omega)$ but it is not bounded, and this will require some additional arguments.

If one has equality of the DN maps for $q_1$ and $q_2$ as in Theorems \ref{Main Thm 1}--\ref{thm2}, the identity \eqref{dmlambda_identity} implies that 
\[
\int_M f v_1 v_2 \,dV = 0
\]
where $f := (q_1-q_2) v_3 \cdots v_m v_{m+1}$ and $v_j$ are as above. We choose $v_3, \ldots, v_m$ to be smooth nonvanishing solutions, and $v_{m+1}$ will be the (nonvanishing) $L^1(\Omega)$ solution whose Dirichlet data is a measure. It is then enough to show that $f=0$, which will imply $q_1=q_2$. For the partial data result in Theorem \ref{Main Thm 1}, we need the following extension given in \cite[Section 4]{CarsteaGhoshUhlmann} of the fundamental result of \cite{ferreira2009linearized} on the linearized local Calder\'on problem that was originally proved for $f \in L^{\infty}(\Omega)$. 

\begin{thm} \label{thm_linearized_lone}
Let $\Omega \subset \R^n$, $n \geq 2$, be a connected bounded open set with $C^\infty$ boundary, and let $\Gamma \subset \p \Omega$ be a nonempty open set. Suppose that $f \in L^1(\Omega)$ is such that 
\[
\int_{\Omega} f v_1 v_2 \,dx = 0
\]
for all $v_j \in C^{\infty}(\ol{\Omega})$ solving $\Delta v_j = 0$ in $\Omega$ with $\mathrm{supp}(v_j|_{\p \Omega}) \subset \Gamma$. Then $f=0$ in $\Omega$.
\end{thm}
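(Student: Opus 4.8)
The plan is to reduce the statement to the classical $L^{\infty}$ result of \cite{ferreira2009linearized} by a density and duality argument, following the approach of \cite[Section 4]{CarsteaGhoshUhlmann}. Let $f \in L^1(\Omega)$ satisfy the orthogonality hypothesis. The key observation is that if $v_1, v_2$ are harmonic in $\Omega$ with $\mathrm{supp}(v_j|_{\p \Omega}) \subset \Gamma$, then $v_1 v_2$ is a product of two such harmonic functions, and the classical theorem asserts that the span of all such products $v_1 v_2$ is dense in $L^1(\Omega)$ (this is precisely the content of the statement that the only $L^{\infty}$ function orthogonal to all of them vanishes, by Hahn--Banach). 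So the difficulty is purely one of regularity: passing from testing against $L^{\infty}$ functions to testing an $L^1$ density against products of \emph{harmonic} functions.

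The key steps, in order, would be the following. First I would recall the harmonic extension/interior regularity facts: if $v$ is harmonic in $\Omega$ and its boundary trace is supported in $\Gamma$, then on any compact subset $K \Subset \Omega$ one has uniform control of $v$ and all its derivatives in terms of, say, $\norm{v}_{L^2(\Omega)}$ or the $C^{2,\alpha}(\p \Omega)$ norm of the boundary data. Second, I would use an exhaustion $\Omega = \bigcup_k \Omega_k$ by smooth subdomains $\Omega_k \Subset \Omega_{k+1}$ and truncate: set $f_k := f \mathbf{1}_{\Omega_k}$, which lies in $L^1$ with compact support, hence in $L^1(\Omega_k)$ and in particular, by the smoothness and boundedness of $\Omega_k$, one can hope to approximate it. Third — and this is the heart of the matter — I would approximate $f$ (or each truncation $f_k$) in $L^1(\Omega)$ by functions $g_j \in L^{\infty}(\Omega)$ (e.g.\ $g_j = $ truncation of $f$ at level $j$), and note that for each $g_j$ the classical theorem combined with \cite{ferreira2009linearized} gives that $g_j$ can be $L^1$- (or at least weakly-) approximated by finite linear combinations of products $v_1 v_2$ of harmonic functions with traces supported in $\Gamma$. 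One then writes $\int_\Omega f\,(v_1 v_2) = 0$ for all such products, and wants to conclude $\int_\Omega f\, h = 0$ for all $h \in L^{\infty}(\Omega)$, forcing $f = 0$.

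The main obstacle, and the point requiring care, is that the classical completeness statement is a density result \emph{in $L^1$} for the products $v_1 v_2$, but $f$ is only in $L^1$, so its natural pairing is with $L^{\infty}$ functions, and the products $v_1 v_2$ are indeed bounded on compact subsets but need not be uniformly bounded up to $\p \Omega$. The clean way around this is to work on a slightly smaller domain: fix a smooth $\Omega' \Subset \Omega$ with $\partial \Omega'$ meeting the relevant geometry appropriately, observe that $f|_{\Omega'} \in L^1(\Omega')$, and that products $v_1 v_2$ restricted to $\Omega'$ are smooth up to $\partial \Omega'$, hence bounded; but we must ensure we still have enough such products, which is where the hypothesis $\mathrm{supp}(v_j|_{\p \Omega}) \subset \Gamma$ (rather than boundary data on all of $\p \Omega$) is used together with unique continuation — harmonic functions vanishing to high order cannot be ruled out trivially, so one instead invokes directly that the Ferreira--Kenig--Salo--Uhlmann construction produces complex geometrical optics solutions that are smooth in $\ol\Omega$ with trace supported in $\Gamma$ and whose products span a dense subspace. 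Concretely: by \cite[Section 4]{CarsteaGhoshUhlmann}, the span of $\{v_1 v_2 : v_j \in C^\infty(\ol\Omega),\ \Delta v_j = 0,\ \mathrm{supp}(v_j|_{\p\Omega}) \subset \Gamma\}$ is dense in $C(\ol{\Omega'})$ for every $\Omega' \Subset \Omega$ (or dense in $L^p(\Omega)$ for some $p$); pairing against the finite measure $f\,dx$ restricted to $\Omega'$, the vanishing hypothesis yields $\int_{\Omega'} f\,\varphi\,dx = 0$ for all $\varphi \in C(\ol{\Omega'})$, hence $f = 0$ a.e.\ on $\Omega'$, and letting $\Omega' \uparrow \Omega$ gives $f = 0$ in $\Omega$. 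If one prefers not to quote this density form, the alternative is a Hahn--Banach argument on $\Omega'$: were $f|_{\Omega'} \neq 0$, it would define a nonzero bounded linear functional on $L^\infty(\Omega')$ annihilating all products $v_1 v_2|_{\Omega'}$; but by duality and the classical theorem applied with the bounded weight $f|_{\Omega'}$ itself — note $f|_{\Omega'} \in L^1(\Omega') \subset (L^\infty(\Omega'))^*$, and we only need to run \cite{ferreira2009linearized} with the \emph{bounded} test function equal to a mollified truncation of $f$ — one reaches a contradiction. Either way, the structural content beyond \cite{ferreira2009linearized} is just the interior smoothness of the CGO solutions and a routine exhaustion/truncation, so I expect no serious new difficulty, only bookkeeping to make the function-space pairings match up.
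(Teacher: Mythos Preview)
Your proposal has a genuine gap, and the soft density/duality route you sketch cannot work as stated. The central step is the claim that products $v_1 v_2$, restricted to some $\Omega' \Subset \Omega$, are dense in $C(\ol{\Omega'})$, so that pairing with $f|_{\Omega'}\,dx$ forces $f = 0$ on $\Omega'$. But the hypothesis is $\int_{\Omega} f\, v_1 v_2 \,dx = 0$, not $\int_{\Omega'} f\, v_1 v_2 \,dx = 0$. If you approximate some $\varphi \in C(\ol{\Omega'})$ uniformly on $\ol{\Omega'}$ by finite sums $\sum_k c_k\, v_{1,k} v_{2,k}$, these sums do not vanish on $\Omega \setminus \Omega'$ and you have no control on their $L^{\infty}(\Omega)$ norms, hence none on $\int_{\Omega\setminus\Omega'} f\,(\sum_k c_k\, v_{1,k} v_{2,k})\,dx$; so $\int_{\Omega} f\,(\sum_k c_k\, v_{1,k} v_{2,k}) = 0$ does not pass to $\int_{\Omega'} f\varphi = 0$, and the exhaustion yields nothing. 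Moreover, the density claim itself does not follow from the classical $L^{\infty}$ theorem: by Hahn--Banach that theorem is equivalent to density of the products in $L^1(\Omega)$, which says nothing about norm density in $C(\ol{\Omega'})$ or $L^{\infty}$. Citing \cite{CarsteaGhoshUhlmann} for this density is circular, since that reference proves exactly the present theorem, and not via the argument you describe. Your Hahn--Banach alternative fails for the same reason: a truncation or mollification of $f$ is bounded but is no longer orthogonal to all products, so the classical theorem does not apply to it.

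The paper's proof does not try to use the $L^{\infty}$ theorem as a black box. It reopens the argument of \cite{ferreira2009linearized} and upgrades the estimate on the CGO correction term $w$ from $H^1$ to $L^{\infty}$ via the maximum principle; this is exactly what makes the products $u(\,\cdot\,,\zeta)u(\,\cdot\,,\eta)$ controlled in $L^{\infty}(\Omega)$ rather than merely in $L^2$, so they can be paired against $f \in L^1$ in the local vanishing step. The propagation step then requires a Runge approximation of harmonic functions in a topology dual to $L^1$, i.e.\ in $L^{\infty}$ (or $W^{1,p}$ for $p > n$); this is nontrivial and is supplied either by \cite{kian2020partial} or by the paper's own construction using auxiliary domains with a $C^{1,\alpha}$ edge. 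None of this is ``bookkeeping'': the $L^{\infty}$ control on the correction and the strong-topology Runge result are the substantive new ingredients, and your outline does not supply either.
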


For Theorem \ref{thm2} we will invoke the results in \cite{LLLS2019nonlinear, GuillarmouSaloTzou} instead.

\subsection*{Acknowledgments.}

M.S.\ was partly supported by the Academy of Finland (Centre of Excellence in Inverse Modelling and Imaging, grant 284715) and by the European Research Council under Horizon 2020 (ERC CoG 770924). L.T.\ was partly supported by Australian Research Council Discovery Projects DP190103451 and DP190103302.

\section{Proof of Theorem \ref{Main Thm 1}}

For the proof of Theorem \ref{Main Thm 1}, we give a lemma related to solving the Dirichlet problem when the boundary value is a finite Borel measure $\mu$ on $\p \Omega$. We use the norm given by the total variation, 
\[
\norm{\mu}_{\mathcal{M}(\p \Omega)} = \abs{\mu}(\p \Omega) = \sup_{\norm{\varphi}_{C(\p \Omega) = 1}} \,\left\lvert \int_{\p \Omega} \varphi \,d\mu \right\rvert.
\]
We need the fact that the solution is in $L^r(\Omega)$ for $1 \leq r < \frac{n}{n-1}$.

\begin{lem} \label{lemma_dirichlet_measure}
Let $\Omega \subset \R^n$, $n \geq 2$, be a bounded open set with $C^\infty$ boundary, and let $\mu$ be a finite complex Borel measure on $\p \Omega$. Consider the function 
\[
\Psi(x) = \int_{\p \Omega} P(x,y) \,d\mu(y), \qquad x \in \Omega,
\]
where $P(x,y)$ is the Poisson kernel for $\Delta$ in $\Omega$. Then $\Psi \in L^r(\Omega)$ where $1 \leq r < \frac{n}{n-1}$, and it solves the Dirichlet problem 
\begin{align}\label{psi_equation}
	\left\{ \begin{array}{rll}
	\Delta \Psi &\!\!\!=0 & \text{ in }\Omega, \\[3pt]
	\Psi&\!\!\!= \mu& \text{ on }\p \Omega
	\end{array} \right.
\end{align}
where the boundary value is understood as follows: for any $w \in C^2(\ol{\Omega})$ with $w|_{\p \Omega} = 0$ one has 
\begin{equation} \label{dirichlet_bv_measure}
\int_{\p \Omega} \p_{\nu} w \,d\mu = \int_{\Omega} (\Delta w) \Psi \,dx.
\end{equation}
\end{lem}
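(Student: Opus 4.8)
The plan is to derive all three conclusions---membership $\Psi\in L^r(\Omega)$ for $1\le r<\tfrac{n}{n-1}$, harmonicity $\Delta\Psi=0$, and the weak boundary identity \eqref{dirichlet_bv_measure}---from the classical pointwise estimate on the Poisson kernel of a smooth bounded domain,
\[
0\le P(x,y)\le C\,\frac{\dist(x,\p\Omega)}{|x-y|^{n}},\qquad x\in\Omega,\ y\in\p\Omega,
\]
combined with elementary functional-analytic manipulations. Note first that $P$ is smooth, in particular jointly continuous, on $\Omega\times\p\Omega$, so $\Psi(x)=\int_{\p\Omega}P(x,y)\,d\mu(y)$ is well defined and smooth on $\Omega$; harmonicity is then immediate by differentiating under the integral sign, since on any compact $K\Subset\Omega$ the $x$-derivatives of $P(x,y)$ are bounded uniformly in $(x,y)\in K\times\p\Omega$ and $\mu$ is finite.

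For the $L^r$ bound I would use that $\dist(x,\p\Omega)\le|x-y|$ for every $y\in\p\Omega$, so that $P(x,y)\le C|x-y|^{-(n-1)}$ and hence, for $x$ ranging over $\Omega\subset B(y,\diam(\Omega))$,
\[
\|P(\ccdot,y)\|_{L^r(\Omega)}^{r}\le C^{r}\int_{B(y,\diam(\Omega))}\!\!|x-y|^{-(n-1)r}\,dx = C'\!\int_0^{\diam(\Omega)}\!\!\rho^{\,n-1-(n-1)r}\,d\rho,
\]
which is finite, with bound independent of $y\in\p\Omega$, exactly when $(n-1)r<n$. Minkowski's integral inequality then gives $\|\Psi\|_{L^r(\Omega)}\le\bigl(\sup_{y\in\p\Omega}\|P(\ccdot,y)\|_{L^r(\Omega)}\bigr)\|\mu\|_{\mathcal M(\p\Omega)}<\infty$. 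This is the step that forces the restriction $r<\tfrac{n}{n-1}$, and it is sharp in view of the matching lower bound $P(x,y)\gtrsim\dist(x,\p\Omega)/|x-y|^{n}$.

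To prove \eqref{dirichlet_bv_measure} I would approximate $\mu$ by smooth boundary data. Pick $g_k\in C(\p\Omega)$ with $g_k\,dS\rightharpoonup\mu$ weakly-$*$ in $\mathcal M(\p\Omega)$ and $\sup_k\|g_k\|_{L^1(\p\Omega,dS)}<\infty$, and let $v_k(x)=\int_{\p\Omega}P(x,y)g_k(y)\,dS(y)\in C^\infty(\ol\Omega)$ be the classical harmonic extension of $g_k$. For fixed $x\in\Omega$ one has $P(x,\ccdot)\in C(\p\Omega)$, so $v_k(x)\to\Psi(x)$ pointwise, while the estimate above gives $\sup_k\|v_k\|_{L^r(\Omega)}\le C\sup_k\|g_k\|_{L^1(dS)}<\infty$. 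Green's second identity applied to the smooth functions $v_k$ and $w$ (using $w|_{\p\Omega}=0$ and $v_k|_{\p\Omega}=g_k$) yields
\[
\int_{\Omega}(\Delta w)\,v_k\,dx=\int_{\p\Omega}\bigl(v_k\,\p_\nu w-w\,\p_\nu v_k\bigr)\,dS=\int_{\p\Omega}g_k\,\p_\nu w\,dS .
\]
As $k\to\infty$ the right side converges to $\int_{\p\Omega}\p_\nu w\,d\mu$ because $\p_\nu w|_{\p\Omega}\in C(\p\Omega)$; in particular the sequence $\int_\Omega(\Delta w)v_k\,dx$ converges. For the left side, pass to an arbitrary subsequence along which $v_k\rightharpoonup V$ weakly in $L^r(\Omega)$ (possible by the uniform $L^r$ bound); testing against $C_c^\infty(\Omega)$, where $v_k\to\Psi$ with a local domination coming from $\|P(x,\ccdot)\|_{L^\infty(\p\Omega)}$ being bounded on compact subsets of $\Omega$, forces $V=\Psi$, so along that subsequence $\int_\Omega(\Delta w)v_k\,dx\to\int_\Omega(\Delta w)\Psi\,dx$ since $\Delta w\in L^\infty(\Omega)\subset L^{r'}(\Omega)$. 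As the full sequence already converges, its limit equals $\int_\Omega(\Delta w)\Psi\,dx$, which is \eqref{dirichlet_bv_measure}.

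The genuine obstacles are the uniform-in-$y$ Poisson kernel estimate---standard for $C^\infty$ (indeed $C^{1,1}$) domains, and the source of the exponent $\tfrac{n}{n-1}$---and the identification of the weak $L^r$ limit of the $v_k$ with $\Psi$. One could instead bypass the approximation altogether by combining Fubini's theorem with the pointwise identity $\int_\Omega P(x,y)\,\Delta w(x)\,dx=\p_\nu w(y)$ for $y\in\p\Omega$, which follows from the Green representation $w(x)=\int_\Omega G(x,z)\Delta w(z)\,dz$ (valid since $w|_{\p\Omega}=0$), the symmetry $G(x,z)=G(z,x)$, and the relation $P(x,y)=\p_{\nu_y}G(x,y)$; but justifying the passage to the boundary in that representation is of comparable difficulty.
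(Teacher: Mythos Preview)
Your argument is correct and follows the same route as the paper: the Poisson bound $P(x,y)\le C|x-y|^{-(n-1)}$ together with Minkowski's integral inequality gives $\Psi\in L^r$, and approximating $\mu$ by absolutely continuous boundary data plus Green's identity yields \eqref{dirichlet_bv_measure} in the limit. The paper streamlines the limit step by choosing $\psi_j\in C^\infty(\p\Omega)$ with $\|\psi_j\,dS-\mu\|_{\mathcal M(\p\Omega)}\to 0$ in total variation (via partition of unity, boundary flattening, and mollification), so that the \emph{same} Minkowski estimate applied to $\Psi_j-\Psi$ gives strong convergence $\Psi_j\to\Psi$ in $L^r(\Omega)$ and the passage to the limit is immediate---this replaces your weak-$L^r$-compactness and limit-identification step; a small slip in your version is that for $g_k$ merely continuous one only has $v_k\in C(\ol\Omega)\cap C^\infty(\Omega)$ rather than $C^\infty(\ol\Omega)$, though Green's identity still holds (approximate $\Omega$ from the inside, or simply take $g_k$ smooth as the paper does).
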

\begin{proof}
By applying a partition of unity, boundary flattening transformations and convolution approximation, we can produce a sequence $\psi_j \in C^{\infty}(\p \Omega)$ such that $\norm{\psi_j \,dS - \mu}_{\mathcal{M}(\p \Omega)} \to 0$. Let $\Psi_j \in C^{\infty}(\ol{\Omega})$ solve $\Delta \Psi_j = 0$ in $\Omega$ with $\Psi_j|_{\p \Omega} = \psi_j$. If $w$ is as in the statement of the lemma, integration by parts gives 
\[
\int_{\p \Omega} (\p_{\nu} w) \psi_j \,dS = \int_{\Omega} (\Delta w) \Psi_j \,dx.
\]

It is thus sufficient to show that $\Psi \in L^r(\Omega)$ and $\Psi_j \to \Psi$ in $L^r(\Omega)$ for $1 \leq r < \frac{n}{n-1}$. We apply the Poisson kernel estimate (see e.g.\ \cite{Krantz2005}) 
\[
P(x,y) \leq \frac{C \,\mathrm{dist}(x, \p \Omega)}{\abs{x-y}^n} \leq \frac{C}{\abs{x-y}^{n-1}}
\]
for some $C > 0$. If $\Omega_{\delta} = \{ x \in \Omega \,:\, \mathrm{dist}(x, \p \Omega) > \delta \}$, the Minkowski inequality in integral form gives 
\begin{align*}
\norm{\Psi(x)}_{L^r(\Omega_{\delta})} &\leq \int_{\p \Omega} \norm{P(\,\cdot\,,y)}_{L^r(\Omega_{\delta})} \,d\abs{\mu}(y) \\
 &\leq \left[ \sup_{y \in \p \Omega} \left( \int_{\Omega_{\delta}} \frac{C}{\abs{x-y}^{(n-1)r}} \,dx \right)^{1/r} \right]  \norm{\mu}_{\mathcal{M}(\p \Omega)}.
\end{align*}
The quantity in brackets is finite uniformly over $\delta > 0$ when $r < \frac{n}{n-1}$. Thus we may let $\delta \to 0$ to obtain that $\Psi \in L^r(\Omega)$. Applying the same argument to 
\[
\Psi_j(x) - \Psi(x) = \int_{\p \Omega} P(x,y) (\psi_j(y) \,dS(y) - d\mu(y))
\]
shows that $\Psi_j \to \Psi$ in $L^r(\Omega)$.
\end{proof}

\begin{proof}[Proof of Theorem \ref{Main Thm 1}]
Let first $q \in C^{\alpha}(\ol{\Omega})$ be fixed. Consider Dirichlet data of the form $f_{\eps} = \eps_1 h_1 + \ldots + \eps_m h_m$ where $h_j \in C^{\infty}(\p \Omega)$ satisfy $\mathrm{supp}(h_j) \subset \Gamma$, and $\eps = (\eps_1, \ldots, \eps_m)$ where $\eps_j$ are sufficiently small. Let $u_{\eps}$ be the solution of \eqref{Main equation} with Dirichlet data $f_{\eps}$. By \cite[Proposition 2.1]{LLST} the map $\eps \mapsto u_{\eps}$ is smooth. By uniqueness of small solutions one has $u_0 = 0$, and by differentiating \eqref{Main equation} with respect to $\eps_j$ one has $\p_{\eps_j} u_{\eps}|_{\eps=0} = v_j$ where $v_j$ is the solution of  
\begin{align}\label{vj_equation}
	\left\{ \begin{array}{rll}
	\Delta v_j &\!\!\!=0 & \text{ in }\Omega, \\[3pt]
	v_j&\!\!\!=h_j & \text{ on }\p \Omega.
	\end{array} \right.
\end{align}
Moreover, applying $\p_{\eps_1} \ldots \p_{\eps_m}$ to \eqref{Main equation} and evaluating at $\eps = 0$ implies that $w := \p_{\eps_1} \ldots \p_{\eps_m} u_{\eps}|_{\eps=0}$ solves the equation 
\begin{align}\label{w_equation}
	\left\{ \begin{array}{rll}
	\Delta w &\!\!\!= -(m!) q v_1 \cdots v_m & \text{ in }\Omega, \\[3pt]
	w&\!\!\!=0 & \text{ on }\p \Omega.
	\end{array} \right.
\end{align}
By elliptic regularity, $v_j \in C^{\infty}(\ol{\Omega})$ and $w \in C^{2,\alpha}(\ol{\Omega})$. The DN map satisfies 
\begin{equation} \label{dnmap_w}
\p_{\eps_1} \ldots \p_{\eps_m} (\Lambda_q(f_{\eps}))|_{\eps = 0} = \p_{\eps_1} \ldots \p_{\eps_m} (\p_{\nu} u_{\eps})|_{\eps=0} = \p_{\nu} w|_{\p \Omega}.
\end{equation}

Now assume that $q_1, q_2 \in C^{\alpha}(\ol{\Omega})$ are such that \eqref{dnmap_measure_assumption} holds. Let $w_j$ be the solution of \eqref{w_equation} for $q = q_j$. By \eqref{dnmap_measure_assumption} and \eqref{dnmap_w}, one has 
\[
\int_{\p \Omega} \p_{\nu}(w_1-w_2) \,d\mu = 0.
\]
Let $\Psi \in L^r(\Omega)$ with $r < \frac{n}{n-1}$ be the solution of $\Delta \Psi = 0$ in $\Omega$ with $\Psi|_{\p \Omega} = \mu$ in the sense of Lemma \ref{lemma_dirichlet_measure}. It follows from \eqref{dirichlet_bv_measure} that 
\[
0 = \int_{\Omega} \Delta(w_1 - w_2) \Psi \,dx = -(m!) \int_{\Omega} (q_1-q_2) v_1 \ldots v_m \Psi \,dx.
\]
Now choose $h_3, \ldots, h_m \in C^{\infty}(\p \Omega)$ so that $\supp(h_j) \subset \Gamma$, $h_j \geq 0$, and $h_j > 0$ somewhere. By the strong maximum principle $v_j > 0$ in $\Omega$ for $3 \leq j \leq m$. We obtain that 
\begin{equation} \label{orthogonality}
\int_{\Omega} [(q_1-q_2) v_3 \cdots v_m \Psi] v_1 v_2 \,dx = 0
\end{equation}
for any $h_1, h_2 \in C^{\infty}(\p \Omega)$ with $\mathrm{supp}(h_j) \subset \Gamma$. Note that the function in brackets is in $L^r(\Omega)$ for $r < \frac{n}{n-1}$. Now we invoke Theorem \ref{thm_linearized_lone}, which implies that $(q_1-q_2) v_3 \cdots v_m \Psi = 0$ in $\Omega$. Since $v_3, \ldots, v_m$ are positive we must have $(q_1-q_2)\Psi = 0$ in $\Omega$. Finally, since $\mu \not\equiv 0$, the solution $\Psi$ cannot vanish in any open subset of $\Omega$ by unique continuation (otherwise \eqref{dirichlet_bv_measure} would imply that $\mu \equiv 0$). Thus $\Psi$ is nonzero in a dense set of points in $\Omega$. Since $q_j$ are continuous, this shows that $q_1 = q_2$.
\end{proof}

\section{Proof of Theorem \ref{thm2}}

We now describe how to prove Theorem \ref{thm2}. The proof is very similar to that of Theorem \ref{Main Thm 1} and we indicate the required modifications. First we note that Lemma \ref{lemma_dirichlet_measure} extends to the case where $\Omega$ is replaced by a compact Riemannian manifold $(M,g)$ with smooth boundary and $\Delta$ is replaced by $\Delta_g$. This relies on estimates for the Poisson kernel $P(x,y)$ on compact manifolds with boundary:
\begin{equation} \label{pk_est_riem}
\abs{\nabla_x^k P(x,y)} \leq \frac{C_k}{d_g(x,y)^{n-1+k}}, \qquad x \in M, \ y \in \p M.
\end{equation}
In fact the case $k=0$ follows e.g.\ from \cite[Lemma 2.2]{HangWangYan}. The general case follows by writing $\eps = d_g(x,y)$ and by inserting $u(\,\cdot\,) = P(\,\cdot\,,y)$  into the elliptic estimate 
\[
\lVert \nabla^k u \rVert_{L^{\infty}(B_{\eps/4}(x) \cap M)} \leq C_k \eps^{-k} \norm{u}_{L^{\infty}(B_{\eps/2}(x) \cap M)}.
\]
The last estimate is valid by standard elliptic regularity after rescaling into a ball of radius one.

Assuming the conditions in Theorem \ref{thm2}, the same argument that leads to \eqref{orthogonality} yields the identity 
\begin{equation} \label{orthogonality_riem}
\int_M (q_1-q_2) v_1 \cdots v_m \Psi \,dV_g = 0
\end{equation}
where $v_j \in C^{\infty}(M)$ are arbitrary solutions of the equation $\Delta_g v_j = 0$ in $M$, and $\Psi \in L^r(M)$ for $1 \leq r < \frac{n}{n-1}$ is the solution of 
\begin{align*}
	\left\{ \begin{array}{rll}
	\Delta_g \Psi &\!\!\!=0 & \text{ in }M, \\[3pt]
	\Psi&\!\!\!= \mu& \text{ on }\p M.
	\end{array} \right.
\end{align*}

Note that by elliptic regularity, $\Psi$ is smooth in $M^{\mathrm{int}}$ and it is also smooth up to the boundary near points $z \in \p M$ so that $\mu = 0$ near $z$. To study the situation near $\supp(\mu)$, we observe using \eqref{pk_est_riem} that for any $x \in M^{\mathrm{int}}$ one has 
\[
\abs{\Psi(x)} \leq \left\lvert \int_{\p M} P(x,y) \,d\mu(y) \right\rvert \leq C \int_{\p M} \frac{1}{d_g(x,y)^{n-1}} \,d\abs{\mu}(y).
\]
Write $f := (q_1-q_2)\Psi$. Using the assumption that $q_1=q_2$ to infinite order on $\p M$, for any $N \geq 0$ there is $C_N > 0$ such that 
\begin{align*}
\abs{f(x)} &\leq C_N d_g(x, \p M)^{N} \int_{\p M} \frac{1}{d_g(x,y)^{n-1}} \,d\abs{\mu}(y) \\
 &\leq C_N d_g(x, \p M)^{N-(n-1)} |\mu|(\p M).
\end{align*}
Choosing $N \geq n$ gives that $f$ is bounded in $M$ and vanishes on $\p M$. Applying similar estimates to derivatives of $f$ in $M^{\mathrm{int}}$ proves that $f$ is actually $C^{\infty}$ up to the boundary in $M$ and it vanishes to infinite order on $\p M$.

We rewrite \eqref{orthogonality_riem} in the form 
\[
\int_M f v_1 \dots v_m \,dV_g = 0
\]
where $f = (q_1-q_2)\Psi$ and $v_j \in C^{\infty}(M)$ are any solutions of $\Delta_g v_j = 0$ in $M$. It now follows from \cite[Proposition 5.1]{LLLS2019nonlinear}, if $(M,g)$ is transversally anisotropic and $m \geq 4$, or from \cite[Theorem 1.4]{GuillarmouSaloTzou}, if $(M,g)$ is a complex manifold satisfying the assumptions of that theorem, that $f = 0$. Since $\mu \not \equiv 0$ and $M$ is connected, $\Psi$ cannot vanish in any open set in $M^{\mathrm{int}}$ by the unique continuation principle. Thus we must also have $q_1-q_2=0$ in $M$, which concludes the proof of Theorem \ref{thm2}.

\begin{rmk}
Under assumption (1) in Theorem \ref{thm2}, the condition that $q_1=q_2$ to infinite order on $\p M$ can be weakened. In fact it would be enough to suppose that $q_1=q_2$ to suitable finite order near $\supp(\mu)$ on $\p M$, since in that case the argument above shows that $(q_1-q_2) \Psi$ is in $C^1(M)$ and hence \cite[Proposition 5.1]{LLLS2019nonlinear} applies. In a similar vein, under assumption (1) and in the special case $\mu = \delta_{x_0}$, it would be enough to assume that $\nabla^k q_1(x_0) = \nabla^k q_2(x_0)$ for finitely many $k$.
\end{rmk}

\appendix

\section{Proof of Theorem \ref{thm_linearized_lone}} \label{sec_linearized_proof}


As mentioned, Theorem \ref{thm_linearized_lone} is proved in \cite[Section 4]{CarsteaGhoshUhlmann}, and the proof relies on a Runge approximation result given in \cite[Lemma 2.6]{kian2020partial}. In this appendix we give a slightly shorter proof for $f \in L^r(\Omega)$ for $r > 1$, which is already sufficient for all the results in this article. Later we also give an alternative argument that works for $f \in L^1(\Omega)$.

We begin with a version of the Runge approximation result given in \cite[Lemma 2.2]{ferreira2009linearized} where the approximation is in the $L^p$ norm where $p$ is large. A stronger result for the $W^{1,p}$ norm is in \cite[Lemma 2.6]{kian2020partial}.

\begin{lem} \label{lemma_runge}
Let $\Omega_1\subset \Omega_2$ be bounded open sets with smooth boundary, and let $1 < p < \infty$. Let $G_{\Omega_2}(x,y)$ be the Dirichlet Green's kernel associated with $\Omega_2$. Then the set 
\[
R = \left\{ \int_{\Omega_2} G_{\Omega_2}(\cdot, y) a(y) \,dy \,:\, a\in C^\infty_c(\Omega_2), \ \mathrm{supp}(a)\subset \Omega_2 \setminus \ol{\Omega}_1 \right\}
\]
is a dense subspace, with respect to the $L^p(\Omega_1)$ topology, in the space $S$ of harmonic functions $u \in C^{\infty}(\ol{\Omega}_1)$ with $u|_{\partial\Omega_1 \cap \partial \Omega_2} = 0$.
\end{lem}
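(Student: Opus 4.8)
The plan is to argue by Hahn--Banach duality, reducing the density statement to a unique continuation fact for the Poisson-type potential of a measure supported in $\Omega_1$.

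First I would record that $R \subseteq S$, so that the assertion makes sense. If $a \in C_c^\infty(\Omega_2)$ with $\supp(a) \subset \Omega_2 \setminus \ol{\Omega}_1$, then $w_a(x) := \int_{\Omega_2} G_{\Omega_2}(x,y) a(y)\,dy$ solves $-\Delta w_a = a$ in $\Omega_2$ with $w_a|_{\p \Omega_2} = 0$; since $\dist(\supp(a), \ol{\Omega}_1) > 0$, the function $w_a$ is harmonic in a fixed neighborhood of $\ol{\Omega}_1$ relative to $\Omega_2$ and vanishes on the adjacent part of $\p \Omega_2$, so boundary elliptic regularity gives $w_a \in C^\infty(\ol{\Omega}_1)$ with $w_a|_{\p \Omega_1 \cap \p \Omega_2} = 0$, i.e.\ $w_a \in S$. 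By Hahn--Banach, $R$ is dense in $S$ in the $L^p(\Omega_1)$ topology if and only if every $v \in L^{p'}(\Omega_1)$, $1/p + 1/p' = 1$, with $\int_{\Omega_1} w_a v\,dx = 0$ for all $a$ as above also satisfies $\int_{\Omega_1} u v\,dx = 0$ for all $u \in S$.

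So I would fix such a $v$, extend it by zero to $\tilde v \in L^{p'}(\Omega_2)$, and let $\phi$ solve $-\Delta \phi = \tilde v$ in $\Omega_2$, $\phi|_{\p \Omega_2} = 0$. Since $1 < p' < \infty$ and $\p \Omega_2$ is smooth, Calder\'on--Zygmund elliptic estimates give $\phi \in W^{2,p'}(\Omega_2) \cap W^{1,p'}_0(\Omega_2)$, and by symmetry of the Green's kernel $\phi(y) = \int_{\Omega_1} G_{\Omega_2}(x,y) v(x)\,dx$. For any admissible $a$, Fubini's theorem and the hypothesis on $v$ yield
\[
\int_{\Omega_2} \phi(y) a(y)\,dy = \int_{\Omega_1} v(x) \Bigl( \int_{\Omega_2} G_{\Omega_2}(x,y) a(y)\,dy \Bigr)\,dx = \int_{\Omega_1} v(x) w_a(x)\,dx = 0,
\]
and since $a \in C_c^\infty(\Omega_2 \setminus \ol{\Omega}_1)$ is arbitrary, $\phi \equiv 0$ on the open set $\Omega_2 \setminus \ol{\Omega}_1$. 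To finish, decompose $\p \Omega_1 = (\p \Omega_1 \cap \p \Omega_2) \cup (\p \Omega_1 \cap \Omega_2)$ up to a null set. On $\p \Omega_1 \cap \p \Omega_2$ one has $\phi = 0$ (as $\phi \in W^{1,p'}_0(\Omega_2)$) and $u = 0$ (by definition of $S$); on the interior interface $\p \Omega_1 \cap \Omega_2$, since $\phi \in W^{2,p'}$ across this smooth hypersurface and vanishes identically on the side $\Omega_2 \setminus \ol{\Omega}_1$, both the trace of $\phi$ and of $\p_\nu \phi$ vanish. Hence, for $u \in S$, Green's identity on $\Omega_1$ (justified by approximating $\phi$ in $W^{2,p'}(\Omega_1)$ by smooth functions), together with $-\Delta \phi = v$ and $\Delta u = 0$ in $\Omega_1$, gives
\[
-\int_{\Omega_1} u v\,dx = \int_{\Omega_1} u\, \Delta \phi\,dx = \int_{\p \Omega_1} \bigl( u\, \p_\nu \phi - \phi\, \p_\nu u \bigr)\,dS + \int_{\Omega_1} \phi\, \Delta u\,dx = 0,
\]
so $v$ annihilates $S$, and density follows.

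The step I expect to require the most care is the regularity bookkeeping around $\phi$: getting $\phi \in W^{2,p'}(\Omega_2)$ up to $\p \Omega_2$ (this is precisely where $1 < p < \infty$, hence $p' < \infty$, is used, and is the reason the $L^1$ case needs the separate argument indicated later in the appendix), checking that the Cauchy data of $\phi$ on the interior interface $\p \Omega_1 \cap \Omega_2$ are well-defined and vanish because $\phi$ is identically zero on the adjacent open set, and justifying the boundary integration by parts with $u \in C^\infty(\ol{\Omega}_1)$ and $\phi$ only in $W^{2,p'}(\Omega_1)$. The one genuinely delicate point is keeping the two portions of $\p \Omega_1$ separate, since $\p \Omega_1$ and $\p \Omega_2$ may intersect.
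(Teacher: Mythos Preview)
Your argument is correct and follows essentially the same route as the paper's proof: Hahn--Banach duality, representing the annihilating functional by $v\in L^{p'}(\Omega_1)$, solving $\Delta\phi=E_0 v$ in $\Omega_2$ with zero Dirichlet data (this is the paper's $w$), using $W^{2,p'}$ regularity and the self-adjointness of the Green operator to conclude $\phi\equiv 0$ on $\Omega_2\setminus\ol{\Omega}_1$, and then an integration by parts to show $\ell|_S=0$. The only cosmetic difference is in the last step: the paper extends $u$ to $Eu\in C^\infty(\ol{\Omega}_2)$ and integrates by parts twice on $\Omega_2$, whereas you apply Green's identity directly on $\Omega_1$ using that both Cauchy data of $\phi$ vanish on $\p\Omega_1\cap\Omega_2$; both conclusions rest on the same vanishing of $\phi$ outside $\ol{\Omega}_1$ and the same $W^{2,p'}$ regularity, and your version avoids constructing the extension.
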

\begin{proof}
Suppose that $\ell$ is a bounded linear functional on $L^p(\Omega_1)$ with $\ell|_R = 0$. We need to show that $\ell|_S = 0$. By duality there is $v \in L^{p'}(\Omega_1)$, where $p'$ is the dual H\"older exponent of $p$, so that  
\[
\ell(u) = \langle v, u \rangle_{\Omega_1}.
\]
Denote by $G$ the solution operator for $\Delta$ in $\Omega_2$ with vanishing Dirichlet data, and write $E_0 v$ for the zero extension of $v$ to $\Omega_2$. The assumption $\ell|_R = 0$ ensures that for any $a\in C^\infty_c(\Omega_2 \setminus \ol{\Omega}_1)$ we have 
\begin{align} \label{vanishing_condition}
0 = \langle v, G a |_{\Omega_1} \rangle_{\Omega_1} = \langle E_0 v, Ga \rangle_{\Omega_2}.
\end{align}

Now, let $w = G(E_0 v) \in W^{2,p'}(\Omega_2)$ solve $\Delta w = E_0 v$ in $\Omega_2$ with $w|_{\p \Omega_2} = 0$. For any $f \in W^{-1,p'}(\Omega_2)$ and $h \in W^{-1,p}(\Omega_2)$, one can check the duality 
statement 
\[
\langle Gf, h \rangle_{\Omega_2} = \langle f, Gh \rangle_{\Omega_2}.
\]
Using this duality statement in \eqref{vanishing_condition}, we obtain that 
\begin{equation} \label{w_vanishing}
w|_{\Omega_2 \setminus \ol{\Omega}_1} = 0.
\end{equation}

Let now $u \in S$, and let $Eu$ be any function in $C^{\infty}(\ol{\Omega}_2)$ with $Eu|_{\Omega_1} = u$. We wish to show that $\ell(u) = 0$. We may compute 
\begin{align}
\ell(u) &= \langle v, u \rangle_{\Omega_1} = \langle E_0 v, Eu \rangle_{\Omega_2} = \langle \Delta w, Eu \rangle_{\Omega_2} \label{runge_ibp}
 \\
 &= \langle \p_{\nu} w, Eu \rangle_{\p \Omega_2} - \langle \nabla w, \nabla(Eu) \rangle_{\Omega_2}. \notag
\end{align}
Here we used that $\nabla w \in W^{1,q}(\Omega_2)$ for $q = \frac{np'}{n-p'}$ when $p' < n$ (and for any $q < \infty$ if $p' \geq n$), showing that $\p_{\nu} w$ is in the Besov space $B^{1-1/q}_{q,q}(\p \Omega_2)$ by the trace theorem \cite[Section 4.7]{Triebel1978}. We integrate by parts once more and use the condition $w|_{\p \Omega_2} = 0$ to obtain that 
\[
\ell(u) = \langle \p_{\nu} w, Eu \rangle_{\p \Omega_2} + \langle w, \Delta(Eu) \rangle_{\Omega_2}.
\]
Since $u \in S$, we have $Eu|_{\p \Omega_1 \cap \p \Omega_2} = 0$ and $\Delta(Eu)|_{\Omega_1} = 0$. On the other hand, \eqref{w_vanishing} implies that $\p_{\nu} w|_{\p \Omega_2 \setminus \p \Omega_1} = 0$. It follows that $\ell(u) = 0$ as required.
\end{proof}

\begin{proof}[Proof of Theorem \ref{thm_linearized_lone} for $f \in L^r(\Omega)$, $r > 1$]
The proof of \cite[Theorem 1.1]{ferreira2009linearized} for $f \in L^{\infty}(\Omega)$ proceeds in three steps:
\begin{enumerate}
\item[1.]
Reduction to a case where $\Omega$ is strictly convex near some $x_0 \in \Gamma$.
\item[2.]
Local result showing that $f = 0$ near $x_0$.
\item[3.]
Iteration of the local result to show that $f=0$ everywhere.
\end{enumerate}
Step 1 works equally well for $f \in L^1(\Omega)$. We may thus assume that we are in the setting in \cite[Section 3]{ferreira2009linearized} where $x_0 = 0$, $T_0 (\p \Omega) = \{ x_1 = 0 \}$, 
\[
\Omega \subset \{ x \in \mR^n \,:\, \abs{x+e_1} < 1 \}, \quad \Gamma \subset \{ x \in \p \Omega \,:\, x_1 \geq -2c \},
\]
for some $c > 0$. (Note that our $\Gamma$ corresponds to $\p \Omega \setminus \Gamma$ in \cite{ferreira2009linearized}.)

Let us indicate the necessary changes in \cite[Section 3]{ferreira2009linearized} in order to do Step 2 for $f \in L^1(\Omega)$. We consider harmonic functions 
\[
u(x,\zeta) = e^{-\frac{ix \cdot \zeta}{h}} + w(x,\zeta)
\]
where $h > 0$ is small, $\zeta \in \mC^n$ satisfies $\zeta \cdot \zeta = 0$, and $w$ solves 
\[
\Delta w = 0 \text{ in $\Omega$}, \qquad w|_{\p \Omega} = -e^{-\frac{ix \cdot \zeta}{h}} \chi|_{\p \Omega}
\]
where $\chi \in C^{\infty}(\p \Omega)$ satisfies $\chi=1$ for $x_1 \leq -2c$ and $\supp(\chi) \subset \{ x_1 \leq -c \}$. Then $\supp(u) \subset \Gamma$. Now, instead of using a $H^1$ estimate for $w$ as in \cite[formula (3.6)]{ferreira2009linearized}, we use an $L^{\infty}$ estimate (i.e.\ maximum principle):
\[
\norm{w}_{L^{\infty}(\Omega)} \leq \lVert e^{-\frac{ix \cdot \zeta}{h}} \chi \rVert_{L^{\infty}(\p \Omega)} \leq e^{-\frac{c}{h} \im\,\zeta_1} e^{\frac{1}{h} \abs{\im \,\zeta'}} \quad \text{when $\im\,\zeta_1 \geq 0$.}
\]
Here we write $\zeta = (\zeta_1, \zeta')$ where $\zeta' \in \mC^{n-1}$. Since we have 
\[
\int_{\Omega} f(x) u(x,\zeta) u(x,\eta) \,dx, \qquad \zeta \cdot \zeta = \eta \cdot \eta = 0,
\]
we get for $\im\,\zeta_1, \im\,\eta_1 \geq 0$ the estimate 
\begin{multline*}
\left| \int_{\Omega} f(x) e^{-\frac{ix \cdot (\zeta+\eta)}{h}} \,dx \right| \leq \norm{f}_{L^1} ( \lVert e^{-\frac{ix \cdot \zeta}{h}} \rVert_{L^{\infty}} \norm{w(x,\eta)}_{L^{\infty}} \\ 
 +  \lVert e^{-\frac{ix \cdot \eta}{h}} \rVert_{L^{\infty}} \norm{w(x,\zeta)}_{L^{\infty}} + \norm{w(x,\zeta)}_{L^{\infty}} \norm{w(x,\eta)}_{L^{\infty}}) \\
  \leq 3 \norm{f}_{L^1} e^{-\frac{c}{h} \min(\im \,\zeta_1, \im \,\eta_1)} e^{\frac{1}{h} (\abs{\im \,\zeta'} + \abs{\im \,\eta'})}.
\end{multline*}
Then, using the same notations as in \cite{ferreira2009linearized}, formula (3.8) in \cite{ferreira2009linearized} gets replaced by 
\begin{equation} \label{lin_main_estimate}
\left| \int_{\Omega} f(x) e^{-\frac{ix \cdot (\zeta+\eta)}{h}} \,dx \right| \leq C \norm{f}_{L^1(\Omega)} e^{-\frac{ca}{2h}} e^{\frac{2C\eps a}{h}}.
\end{equation}

We now proceed to Section 4 in \cite{ferreira2009linearized}. Note that for $f \in L^1(\Omega)$, equation (4.1) in \cite{ferreira2009linearized} is replaced by 
\[
\abs{Tf(z)} \leq e^{\frac{1}{2h} \abs{\im \,z}^2} \norm{f}_{L^1(\Omega)}
\]
for $z \in \mC^n$. Using the condition $\supp(f) \subset \{ x_1 \leq 0 \}$, equation (4.2) in \cite{ferreira2009linearized} is replaced by 
\[
\abs{Tf(z)} \leq e^{\frac{1}{2h} (\abs{\im \,z}^2 - (\re \,z_1)^2)} \norm{f}_{L^1(\Omega)}
\]
for $\re\,z_1 \geq 0$. Finally, using \eqref{lin_main_estimate}, equation (4.7) in \cite{ferreira2009linearized} is replaced by 
\[
\abs{Tf(z)} \leq C \norm{f}_{L^1(\Omega)} e^{\frac{1}{2h} (\abs{\im \,z}^2 - \abs{\re \,z}^2 - \frac{ca}{2})}.
\]
From the three estimates above we see that the estimate (4.8) in \cite{ferreira2009linearized} holds with $h^{-1} \norm{f}_{L^{\infty}(\Omega)}$ replaced by $\norm{f}_{L^1(\Omega)}$. The proof of Step 2 is now completed as in \cite[Section 4]{ferreira2009linearized}.

It remains to explain how to do Step 3 for $f \in L^r(\Omega)$, $r > 1$. Inspecting the arguments in \cite[Section 2]{ferreira2009linearized}, it is sufficient to prove that the set described in \cite[formula (2.2)]{ferreira2009linearized} is dense for the $L^p(\Omega_1)$ topology, for any $p < \infty$, in the subspace of harmonic functions $u \in C^{\infty}(\ol{\Omega}_1)$ such that $u|_{\p \Omega_1 \cap \p \Omega_2} = 0$. This follows from Lemma \ref{lemma_runge}.
\end{proof}

In the remainder of this section we prove Theorem \ref{thm_linearized_lone} for $f \in L^1(\Omega)$. We have seen in the proof above that Steps 1 and 2 already work for $f \in L^1(\Omega)$, so it is enough to consider Step 3 and an analogue of the Runge approximation argument of Lemma \ref{lemma_runge} but in the $L^{\infty}$ norm. Such a result was proved in \cite[Lemma 2.6]{kian2020partial}, but here we give an alternative argument where $\Omega_2$ will have nonsmooth boundary.

Let us briefly explain the rationale behind this. In the $L^p$ approximation proof above we used \eqref{runge_ibp}, where $w$ solves $\Delta w = E_0 v$ in $\Omega_2$ with $w|_{\p \Omega_2} = 0$, and $Eu$ is a sufficiently regular extension of $u \in S$. For approximation in $L^{\infty}$, the quantity $v$ will be in the dual of $L^{\infty}$ (i.e.\ a finitely additive measure) and one would require additional work to make sense of the normal derivative $\p_{\nu} w|_{\p \Omega_2}$ in \eqref{runge_ibp}. We will instead construct the extension $Eu$ so that $Eu|_{\p \Omega_2} = 0$. When $\Omega_2$ is a smooth domain such an extension does not exist in general, but for suitable nonsmooth domains it does.

Let $0 < \alpha < 1$. We say that a domain $\Omega$ has a \emph{$C^{1,\alpha}$ edge singularity} along a subset $E \subset \p \Omega$ if for any $x_0 \in E$ there is a neighborhood $U$ of $x_0$ in $\mR^n$ and a diffeomorphism $F: U \to \tilde{U} \subset \mR^n$ such that $F(x_0) = 0$ and one has bijective maps 
\begin{align*}
F&: \Omega \cap U \to \{ (x_1, x_2, x') \,:\, x_2 < \psi(x_1) \} \cap \tilde{U}, \\
F&: E \cap U \to \{ (0,0,x') \} \cap \tilde{U},
\end{align*}
where $\psi: \mR \to \mR$ is smooth away from $0$ with $\psi(t) = 0$ for $t \leq 0$, and the function $\psi(t)/t^{1+\alpha}$ is smooth in $[0,\infty)$ and nonvanishing at $0$. Here we write $x = (x_1, x_2, x')$ for points $x \in \mR^n$, where $x' \in \mR^{n-2}$.

\begin{lem} \label{runge apprx}
Let $\Omega_1\subset \Omega_2 \subset \mR^n$ be bounded open sets, let $\Omega_1$ have smooth boundary, and assume that $\Omega_2$ has smooth boundary except at $\p (\p \Omega_1 \cap \p \Omega_2)$ where it has a $C^{1,\alpha}$ edge singularity. Also assume that if $x_0$ is a point on the edge and $\Omega_2$ is locally near $x_0$ given by $\{ x_2 < \psi(x_1) \}$ as above, then $\Omega_1$ is locally near $x_0$ given by $\{ x_2 < \eta(x_1) \}$ where $\eta \in C^{\infty}(\mR)$ satisfies $\eta(t) = 0$ for $t \leq 0$ and $\eta(t) < \psi(t)$ for $t > 0$.


Let $G_{\Omega_2}(x,y)$ be the Dirichlet Green's kernel associated with $\Omega_2$. Then for $1 < p < 1+2/\alpha$ the set 
\[
R = \left\{ \int_{\Omega_2} G_{\Omega_2}(\cdot, y) a(y) \,dy  \,:\, \, a\in C^\infty_c(\Omega_2), \ \mathrm{supp}(a)\subset \Omega_2 \setminus \ol{\Omega}_1 \right\}
\]
is a dense subspace, with respect to the $W^{1,p}(\Omega_1)$ topology, in the space $S$ of harmonic functions $u \in C^{\infty}(\ol{\Omega}_1)$ with $u|_{\partial\Omega_1 \cap \partial \Omega_2} = 0$.
\end{lem}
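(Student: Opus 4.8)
The plan is to run the duality argument from the proof of Lemma~\ref{lemma_runge}, with one essential modification: in place of the regular extension $Eu$ of $u\in S$ used there, one builds an extension that \emph{vanishes on $\p\Omega_2$}. For smooth $\p\Omega_2$ no such extension of a general $u\in S$ exists, because near $\p(\p\Omega_1\cap\p\Omega_2)$ the set $\Omega_2\setminus\overline\Omega_1$ is then flatter than any power of the distance to the edge and the natural corrector has infinite $W^{1,p}$ norm; the point of the $C^{1,\alpha}$ edge is that it makes this region only polynomially thin — of $x_2$-width comparable to $x_1^{1+\alpha}$ near an edge point in the coordinates of the definition — and the corrector then belongs to $W^{1,p}$ exactly when $p<1+2/\alpha$. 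By the Hahn--Banach theorem it suffices to show that every bounded linear functional $\ell$ on $W^{1,p}(\Omega_1)$ with $\ell|_R=0$ vanishes on $S$; I would write $\ell(u)=\int_{\Omega_1}(v_0u+V\cdot\nabla u)\,dx$ with $v_0\in L^{p'}(\Omega_1)$, $V\in L^{p'}(\Omega_1;\mR^n)$ and $p'=p/(p-1)$.

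The first step is to reproduce the dual object of Lemma~\ref{lemma_runge}. Extending $v_0,V$ by zero to $\Omega_2$ gives $E_0T:=E_0v_0-\di(E_0V)\in W^{-1,p'}(\Omega_2)$ with $\langle E_0T,\varphi\rangle_{\Omega_2}=\ell(\varphi|_{\Omega_1})$ for every $\varphi\in W^{1,p}_0(\Omega_2)$. Let $w=G_{\Omega_2}(E_0T)$ solve $\Delta w=E_0T$ in $\Omega_2$ with $w|_{\p\Omega_2}=0$; since $\Omega_2$ is a $C^1$ domain, the $L^{p'}$ theory for the Dirichlet Laplacian yields $w\in W^{1,p'}_0(\Omega_2)$. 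Exactly as in Lemma~\ref{lemma_runge}, the hypothesis $\ell|_R=0$ combined with the self-duality identity $\langle E_0T,G_{\Omega_2}a\rangle_{\Omega_2}=\int_{\Omega_2}wa\,dx$ (for $a\in C^\infty_c(\Omega_2\setminus\overline\Omega_1)$) forces $w\equiv0$ on $\Omega_2\setminus\overline\Omega_1$; hence $\nabla w=0$ a.e.\ there, and since $\p\Omega_1$ is smooth and $w$ stays in $W^{1,p'}$ across $\p\Omega_1$ while $w|_{\p\Omega_2}=0$ handles the shared part, the trace of $w|_{\Omega_1}$ on $\p\Omega_1$ vanishes.

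The heart of the proof is the construction of $Eu$ for $u\in S$. Away from the edge it is routine — near the relative interior of $\p\Omega_1\cap\p\Omega_2$ the two domains coincide and $u$ already vanishes, and near interior points of $\p\Omega_1$ one takes an arbitrary smooth extension and cuts it off before it reaches $\p\Omega_2$. Near an edge point $x_0$, in the local coordinates with $\Omega_2=\{x_2<\psi(x_1)\}$ and $\Omega_1=\{x_2<\eta(x_1)\}$, I would put $Eu=u$ on $\Omega_1$ and
\[
Eu(x)=g(x_1,x')\,\chi\!\LC\frac{\psi(x_1)-x_2}{\psi(x_1)-\eta(x_1)}\RC,\qquad g(x_1,x'):=u(x_1,\eta(x_1),x'),
\]
on the gap $\{0<x_1,\ \eta(x_1)<x_2<\psi(x_1)\}$, where $\chi\in C^\infty(\mR)$ equals $1$ on $[1,\infty)$ and $0$ on $(-\infty,0]$. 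This $Eu$ is continuous, agrees with $u$ across $\p\Omega_1$, and vanishes on $\p\Omega_2$. Because the edge is contained in $\p\Omega_1\cap\p\Omega_2$ (where $u=0$) one has $g(0,x')=0$, so $|g|\lesssim x_1$ and $|\nabla_{x_1,x'}g|\lesssim1$ near the edge; meanwhile $\psi(x_1)-\eta(x_1)\sim c\,x_1^{1+\alpha}$ gives $|\p_{x_2}\chi(\cdots)|\lesssim x_1^{-(1+\alpha)}$ and $|\p_{x_1}\chi(\cdots)|\lesssim x_1^{-1}$ on the gap. Hence $|\p_{x_2}Eu|\lesssim x_1^{-\alpha}$ there while the other first derivatives of $Eu$ stay bounded, and integrating $x_1^{-\alpha p}$ against the $x_2$-width $\sim x_1^{1+\alpha}$ of the gap produces a finite integral precisely for $p<1+2/\alpha$; thus $Eu\in W^{1,p}_0(\Omega_2)$ in that range. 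With this extension the proof finishes as in Lemma~\ref{lemma_runge}: using $\langle E_0T,Eu\rangle_{\Omega_2}=\ell(u)$ and $\Delta w=E_0T$, then $\nabla w=0$ off $\Omega_1$ and $Eu=u$ on $\Omega_1$, and finally integration by parts on the smooth domain $\Omega_1$ together with $w|_{\p\Omega_1}=0$ and $\Delta u=0$,
\[
\ell(u)=\langle\Delta w,Eu\rangle_{\Omega_2}=-\int_{\Omega_2}\nabla w\cdot\nabla Eu\,dx=-\int_{\Omega_1}\nabla w\cdot\nabla u\,dx=-\int_{\p\Omega_1}w\,\p_\nu u\,dS=0,
\]
so $\ell|_S=0$, which is what we need.

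I expect the main obstacle to be the construction and the $W^{1,p}$-bound for the boundary-vanishing extension $Eu$ near the edge; this is where the $C^{1,\alpha}$ geometry, and the restriction $p<1+2/\alpha$, are genuinely used. The rest is the bookkeeping of Lemma~\ref{lemma_runge} transported from $L^p$ to $W^{1,p}$, requiring only the standard $L^{p'}$ elliptic theory on the $C^1$ domain $\Omega_2$ and an elementary trace argument for $w$; one should also keep track of exactly where the mere $C^1$ (rather than $C^\infty$) regularity of $\Omega_2$ is invoked.
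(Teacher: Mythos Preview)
Your proposal is correct and follows essentially the same route as the paper: the duality setup, the use of the $W^{1,p'}_0$ solvability on the $C^{1,\alpha}$ domain $\Omega_2$ (Jerison--Kenig), the vanishing of $w$ on $\Omega_2\setminus\overline{\Omega}_1$, and---crucially---the construction of a $W^{1,p}_0(\Omega_2)$ extension of $u\in S$ via a cutoff exploiting the $x_1^{1+\alpha}$ width of the gap, with the same $p<1+2/\alpha$ computation. The only cosmetic differences are that the paper first flattens to a model half-space before writing the cutoff (your formula works directly in the $\psi,\eta$ coordinates), and that in the closing step the paper approximates $w$ by $w_j\in C^\infty_c(\Omega_1)$ rather than invoking $w|_{\p\Omega_1}=0$ in an integration by parts; both variants encode the same fact that $w\in W^{1,p'}_0(\Omega_1)$.
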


We begin with an extension result in a model case.

\begin{lem}
Let $u \in C^1_c(\{x_2 \leq 0 \})$ be such that $u|_{x_2=0}$ is supported in $\{ x_1 \geq 0 \}$. Given $0 < \alpha < 1$ and $\delta > 0$, there is an extension $\tilde{u}$ of $u$ to $\mR^n$ such that $\tilde{u} \in W^{1,p}(\mR^n)$ for $1 \leq p < 1+2/\alpha$ and 
\[
\mathrm{supp}(\tilde{u}) \subset \{x_2\leq 0\}\cup \{ (x_1,x_2,x') \,:\, x_1 > 0, \,x_2 \leq \delta x_1^{1+\alpha}\}.
\]
\end{lem}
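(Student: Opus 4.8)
The plan is to build $\tilde{u}$ explicitly by reflecting $u$ across the hypersurface $\{x_2 = 0\}$, but in a way that is "tapered off" along the direction of the edge so that the support condition holds. Concretely, I would first choose a cutoff $\chi \in C^{\infty}(\mathbb{R})$ with $\chi(t) = 1$ for $t \leq 1$ and $\chi(t) = 0$ for $t \geq 2$, and for $x_1 > 0$ set a scale $h(x_1) := \delta x_1^{1+\alpha}$. The natural candidate is
\[
\tilde{u}(x_1, x_2, x') := \chi\!\left( \frac{x_2}{h(x_1)} \right) u\!\left(x_1, -x_2, x'\right), \qquad x_2 > 0,
\]
and $\tilde{u} = u$ for $x_2 \leq 0$. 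Since $u|_{x_2 = 0}$ is supported in $\{x_1 \geq 0\}$ and $u$ is $C^1_c$, the two pieces glue continuously along $\{x_2 = 0\}$, and the cutoff forces $\tilde{u}(x) = 0$ whenever $x_2 \geq 2 h(x_1) = 2\delta x_1^{1+\alpha}$; after relabelling $\delta$ (replace $\delta$ by $\delta/2$ at the start) this gives exactly the stated support inclusion $\{x_2 \leq \delta x_1^{1+\alpha}\}$ on the upper side. One should also truncate in $x_1$ and $x'$ by a fixed compactly supported cutoff so that $\tilde{u}$ has compact support, which is harmless since $u$ does.

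Next I would estimate the $W^{1,p}$ norm. Away from the edge $\{x_1 = x_2 = 0\}$ everything is smooth and bounded with compact support, so the only issue is integrability of $\nabla \tilde{u}$ near the edge. The dangerous term comes from differentiating the cutoff: $\partial_{x_2}[\chi(x_2/h(x_1))]$ contributes a factor $h(x_1)^{-1} \sim x_1^{-(1+\alpha)}$, and $\partial_{x_1}[\chi(x_2/h(x_1))]$ contributes $x_2 h'(x_1) h(x_1)^{-2} \sim x_1^{-1}$ (using $x_2 \lesssim h(x_1)$ on the support of the derivative of the cutoff). These factors multiply $u(x_1, -x_2, x')$, which is bounded, and the $x_2$-integration is over an interval of length $\sim h(x_1) \sim x_1^{1+\alpha}$. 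So for the worst term,
\[
\int |\partial_{x_2}\tilde{u}|^p \, dx \lesssim \int_0^{1} x_1^{-(1+\alpha)p} \cdot x_1^{1+\alpha} \, dx_1 = \int_0^1 x_1^{(1+\alpha)(1-p)} \, dx_1,
\]
which is finite precisely when $(1+\alpha)(1-p) > -1$, i.e. $p < 1 + \tfrac{1}{1+\alpha}$. Hmm — this is a weaker range than the claimed $p < 1 + 2/\alpha$, so I would need to be more careful: one should also use that $|u(x_1,-x_2,x') - u(x_1,0,x')| \lesssim x_2 \lesssim h(x_1)$ since $u|_{x_2=0}$ vanishes where the cutoff transition happens... actually $u|_{x_2=0}$ is supported in $\{x_1 \geq 0\}$ so it need not vanish there. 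The right fix is to subtract the boundary value: write $u(x_1,-x_2,x') = u(x_1,0,x') + [u(x_1,-x_2,x') - u(x_1,0,x')]$, extend $u(x_1,0,x')$ (which is supported in $x_1 \ge 0$) by a cleverer construction and handle the difference, which gains a factor $x_2 \lesssim h(x_1) \sim x_1^{1+\alpha}$ against the bad $x_1^{-(1+\alpha)}$, removing the singularity entirely for that piece. I expect the main obstacle to be precisely this bookkeeping: arranging the decomposition and the $x'$-dependence so that the claimed exponent range $1 \le p < 1 + 2/\alpha$ comes out, and checking that the piece built from $u(x_1,0,\cdot)$ — which is a function on $\{x_1 \ge 0, x_2 = 0\}$ that must be extended into $\{x_2 \le \delta x_1^{1+\alpha}\}$ — can itself be extended with the right integrability, e.g. by a Whitney-type extension adapted to the cuspidal region.

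Finally I would assemble the pieces: verify $\tilde{u} \in L^p$ (trivial, since $\tilde{u}$ is bounded with compact support), verify $\nabla \tilde{u} \in L^p$ from the estimates above across the full range, and check that $\tilde{u}$ genuinely defines a weak gradient across $\{x_2 = 0\}$ — this holds because $\tilde{u}$ is continuous there and piecewise $C^1$ with matching traces, so no singular part appears on the interface. I would then record that by construction $\mathrm{supp}(\tilde{u})$ is contained in $\{x_2 \le 0\} \cup \{x_1 > 0, \ x_2 \le \delta x_1^{1+\alpha}\}$, which is the assertion. The subsequent Lemma~\ref{runge apprx} then uses this model extension, transported by the edge-flattening diffeomorphism $F$ and patched with a partition of unity, to produce an extension $Eu$ of $u \in S$ vanishing on $\partial\Omega_2$, which is what replaces the smooth extension in the integration-by-parts identity \eqref{runge_ibp}.
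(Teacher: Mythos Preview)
Your construction is essentially the paper's: take any $C^1_c$ extension (your even reflection is one such choice) and multiply by $\chi(x_2/x_1^{1+\alpha})$. The place where your argument stalls is the estimate of the dangerous term $\chi'(x_2/x_1^{1+\alpha})\,x_1^{-(1+\alpha)}\,u(x_1,-x_2,x')$. Your proposed fix---splitting $u(x_1,-x_2,x') = u(x_1,0,x') + [\text{difference}]$ and then invoking a separate Whitney-type extension for the boundary-value piece---is unnecessary and you admit you do not know how to carry it out. The correct (and much simpler) observation is to subtract at the \emph{edge}: since $u|_{x_2=0}$ vanishes for $x_1<0$ and $u$ is continuous, one has $u(0,0,x')=0$, and then $C^1$ regularity gives
\[
|u(x_1,-x_2,x')| = |u(x_1,-x_2,x') - u(0,0,x')| \le C(|x_1|+|x_2|) \le C x_1
\]
on $\mathrm{supp}\,\chi'(x_2/x_1^{1+\alpha})$, where $x_2 \sim x_1^{1+\alpha} \le x_1$. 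The bad term is therefore bounded by $C x_1^{-\alpha}$, and integrating over $x_2$ in an interval of length $\sim x_1^{1+\alpha}$ yields $\int_0^1 x_1^{-\alpha p + 1 + \alpha}\,dx_1$, finite exactly when $p < 1 + 2/\alpha$. This single line replaces your entire decomposition.

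One further point: your verification that $\tilde u$ has a weak gradient across $\{x_2=0\}$ (``piecewise $C^1$ with matching traces'') glosses over the edge $\{x_1=x_2=0\}$, where $\tilde u$ is only continuous. The paper handles this by excising $\{|(x_1,x_2)|<\eps\}$, integrating by parts, and letting $\eps\to 0$; the boundary term vanishes because $\tilde u$ is continuous and the surface measure is $O(\eps)$. You should include this step explicitly.
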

\begin{proof}
Let $Eu$ be any $C^1_c(\mR^n)$ extension of $u$, and define 
\[
\tilde{u}(x) := \begin{cases} u(x), & x_2 \leq 0, \\ \chi(x_2/x_1^{1+\alpha}) Eu(x), & x_1 > 0, \,x_2 > 0, \\ 0 & \text{elsewhere}, \end{cases}
\]
where $\chi = \chi_{\delta} \in C^{\infty}_c(\mR)$ satisfies $\chi(t) = 1$ for $\abs{t} \leq \delta/2$ and $\chi(t) = 0$ for $\abs{t} \geq \delta$. Then $\tilde{u}$ is $C^1$ away from $\{ x_1 = x_2 = 0 \}$ and continuous in $\mR^n$ since $u(0,0,x') = 0$. If $\varphi \in C^{\infty}_c(\mR^n)$, we may compute the weak derivatives of $\tilde{u}$ via 
\begin{align*}
\int_{\mR^n} \tilde{u} \p_j \varphi \,dx &= \lim_{\eps \to 0} \int_{\abs{(x_1,x_2)} > \eps} \tilde{u} \p_j \varphi \,dx \\
 &= \lim_{\eps \to 0} \int_{\abs{(x_1,x_2)} = \eps} \tilde{u} \varphi \nu_j \,dS - \lim_{\eps \to 0} \int_{\abs{(x_1,x_2)} > \eps} \p_j \tilde{u} \varphi \,dx.
\end{align*}
The first term on the right vanishes by continuity of $\tilde{u}$, and hence the weak derivative $\p_j \tilde{u}$ is given by 
\[
\p_j \tilde{u}(x) := \begin{cases} \p_j u(x), & x_2 \leq 0, \\ \p_j(\chi(x_2/x_1^{1+\alpha}) Eu(x)), & x_1 > 0, \,x_2 > 0, \\ 0 & \text{elsewhere}. \end{cases}
\]
It is enough to verify that $\p_j(\chi(x_2/x_1^{1+\alpha}) Eu(x)) \in L^p(\{ x_1, x_2 > 0 \})$ for $p < 1+2/\alpha$. This is clear for $j \geq 3$. For $j=2$ we compute 
\[
\p_2(\chi(x_2/x_1^{1+\alpha}) Eu) = \chi(x_2/x_1^{1+\alpha}) \p_2 Eu + \chi'(x_2/x_1^{1+\alpha}) x_1^{-1-\alpha} Eu
\]
The first term is in $L^p$. For the second term we use that $\abs{t} \sim \delta$ on $\supp(\chi')$, which gives that $\abs{x_2} \sim \delta x_1^{1+\alpha}$ on $\supp(\chi'(x_2/x_1^{1+\alpha}))$. Using the fact that $Eu \in C^1_c(\mR^n)$, in $\{ x_1, x_2 > 0 \}$ we have 
\begin{multline*}
\left\lvert \frac{\chi'(x_2/x_1^{1+\alpha})}{x_1^{1+\alpha}} Eu(x) \right\rvert = \left\lvert \frac{\chi'(x_2/x_1^{1+\alpha})}{x_1^{1+\alpha}} (Eu(x_1,x_2,x') - Eu(0,0,x')) \right\rvert \\
 \leq C \left\lvert \frac{\chi'(x_2/x_1^{1+\alpha})}{x_1^{1+\alpha}}(\abs{x_1} + \abs{x_2}) \right\rvert \leq C \left\lvert \frac{\chi'(x_2/x_1^{1+\alpha})}{x_1^{\alpha}} \right\rvert.
\end{multline*}
The last quantity is in $L^p(\{ x_1, x_2 > 0 \} \cap B_1)$ when $p < 1+2/\alpha$ since $x_2 \sim \delta x^{1+\alpha}$ in the integration set. The behaviour of $\p_1 \tilde{u}$ is even better. This proves that $\tilde{u} \in W^{1,p}$ for $p < 1+2/\alpha$.
\end{proof}

\begin{cor}
\label{extension out of general domain}
Let $\Omega_1\subset \Omega_2$ be bounded open sets having the properties stated in Lemma \ref{runge apprx}. Suppose that $u\in C^1(\ol{\Omega}_1)$ has vanishing trace on $\partial\Omega_1\cap\partial\Omega_2$. Then for $1 \leq p < 1+2/\alpha$, $u$ has an extension $\tilde u\in W^{1,p}(\R^n)$ supported in $\overline{\Omega}_2$.
\end{cor}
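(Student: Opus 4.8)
The plan is to patch together the model extension lemma above by a partition of unity on the compact set $\overline{\Omega}_1$. Every point of $\overline{\Omega}_1$ is of one of three types: (a) a point of $\Omega_2$ (an interior point of $\Omega_1$, or a point of $\p\Omega_1$ lying in $\Omega_2$); (b) a point in the relative interior of $\p\Omega_1\cap\p\Omega_2$; or (c) a point of the edge $\p(\p\Omega_1\cap\p\Omega_2)$. Accordingly I would take a finite cover $\{U_k\}$ of $\overline{\Omega}_1$ by bounded open sets each of one of these types, construct in each $U_k$ a local extension $u_k\in W^{1,p}(U_k)$ of $u|_{\Omega_1\cap U_k}$ with $\supp(u_k)\subset\overline{\Omega}_2$, and then set $\tilde u=\sum_k\zeta_k u_k$ for a partition of unity $\{\zeta_k\}$ subordinate to $\{U_k\}$ with $\sum_k\zeta_k\equiv1$ near $\overline{\Omega}_1$ and each $\zeta_k$ supported in the region where $u_k=u$ on $\Omega_1$. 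This $\tilde u$ is a finite sum of compactly supported $W^{1,p}$ functions, it equals $u$ in $\Omega_1$, and it is supported in $\overline{\Omega}_2$.

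For type (a), standard Sobolev extension of $u$ across the smooth hypersurface $\p\Omega_1$ (or $u$ itself when $U_k\subset\Omega_1$) gives $u_k\in W^{1,p}(U_k)$, and $\zeta_k u_k$ is supported in $U_k\subset\Omega_2$. For type (b), near such a point $\p\Omega_1$ and $\p\Omega_2$ coincide and are smooth — by the hypotheses of Lemma \ref{runge apprx} the only non-smooth points of $\p\Omega_2$ lie on the edge — and since $\Omega_1\subset\Omega_2$, flattening $\p\Omega_1$ turns both domains locally into a half-space $\{x_n<0\}$; as $u$ has vanishing trace there, the extension of $u$ by zero lies in $W^{1,p}$ and is supported in $\{x_n\le0\}$, hence in $\overline{\Omega}_1\subset\overline{\Omega}_2$ locally.

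The substantive case is (c). Composing the diffeomorphism $F$ from the definition of the $C^{1,\alpha}$ edge singularity with the smooth map $(x_1,x_2,x')\mapsto(x_1,x_2-\eta(x_1),x')$ that straightens $\p\Omega_1$, I may assume near the edge point that $\Omega_1=\{x_2<0\}$ and $\Omega_2=\{x_2<\tilde\psi(x_1)\}$, where $\tilde\psi=\psi-\eta$. Because $\eta$ is smooth and vanishes on $(-\infty,0]$, it vanishes to infinite order at $0$, so $\eta(t)/t^{1+\alpha}$ extends smoothly to $[0,\infty)$ with value $0$ at $0$; hence $\tilde\psi$ again has the edge structure: smooth away from $0$, $\tilde\psi\equiv0$ on $(-\infty,0]$, $\tilde\psi>0$ on $(0,\infty)$ (as $\eta<\psi$ there), and $\tilde\psi(t)/t^{1+\alpha}$ smooth and nonvanishing at $0$. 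In these coordinates $\p\Omega_1\cap\p\Omega_2$ corresponds to $\{x_1\le0,\ x_2=0\}$, where $u$ has vanishing trace, so after multiplying $u$ by a smooth cutoff we obtain a function in $C^1_c(\{x_2\le0\})$ whose trace on $\{x_2=0\}$ is supported in $\{x_1\ge0\}$. The model lemma then yields an extension $u_k\in W^{1,p}(\mathbb{R}^n)$ for $1\le p<1+2/\alpha$, supported in $\{x_2\le0\}\cup\{x_1>0,\ x_2\le\delta x_1^{1+\alpha}\}$ for any prescribed $\delta>0$; choosing $\delta$ below the (positive) minimum of $\tilde\psi(t)/t^{1+\alpha}$ over the relevant compact $t$-range, this support set lies inside $\{x_2\le\tilde\psi(x_1)\}=\overline{\Omega}_2$ locally. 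Pulling $u_k$ back through the bi-Lipschitz coordinate changes preserves membership in $W^{1,p}$ and the support condition.

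I expect case (c) to be the main obstacle: one has to recognize that straightening $\p\Omega_1$ costs only a smooth change of variables and turns $\Omega_2$ into the subgraph of $\tilde\psi=\psi-\eta$, which retains the $C^{1,\alpha}$ edge structure precisely because $\eta$ vanishes to infinite order at the edge, and then one must squeeze the support of the model extension below $\{x_2=\delta x_1^{1+\alpha}\}$ with $\delta$ small enough to fit inside $\Omega_2$ — possible since $\tilde\psi(t)\gtrsim t^{1+\alpha}$ near $0$. The partition-of-unity bookkeeping needed to guarantee $\tilde u=u$ on $\Omega_1$ and $\supp\tilde u\subset\overline{\Omega}_2$ is routine.
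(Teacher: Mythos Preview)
Your proposal is correct and follows essentially the same approach as the paper. The paper's proof is terser: it dismisses cases (a) and (b) with the single phrase ``by using smooth cutoff functions it suffices to construct the extension near any point of $\partial(\partial\Omega_1\cap\partial\Omega_2)$,'' and then treats the edge case exactly as you do --- applying the diffeomorphism $F$, straightening $\p\Omega_1$ via $y_2=x_2-\eta(x_1)$, observing that $\psi-\eta$ retains the $C^{1,\alpha}$ edge structure because $\eta$ vanishes to infinite order at $0$, and invoking the model lemma. Your version is more explicit about the partition of unity and about choosing $\delta$ small enough so the model extension fits inside $\{x_2<\tilde\psi(x_1)\}$, points the paper leaves implicit.
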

\begin{proof}
Since $u|_{\partial\Omega_1\cap\partial\Omega_2} = 0$, by using smooth cutoff functions it suffices to construct the extension near any point of the submanifold $\partial(\partial\Omega_1\cap \partial\Omega_2)$.  By the assumptions on $\Omega_1$ and $\Omega_2$, we can use a diffeomorphism to map a neighbourhood of any $x_0 \in \partial(\partial\Omega_1\cap \partial\Omega_2)$ into $\R^n = \{(x_1,x_2,x')\}$ where locally $\Omega_1 = \{x_2< \eta(x_1) \}$ and  $\Omega_2 = \{ x_2 < \psi(x_1) \}$ with $\psi$ and $\eta$ having the properties stated above. Choose new coordinates so that $y_1 = x_1$, $y_2 = x_2 - \eta(x_1)$, and $y' = x'$. Then locally $\Omega_1 = \{  y_2 < 0 \}$ and $\Omega_2 = \{ y_2 < \psi_1(y_1) \}$ where $\psi_1(t) = \psi(t) - \eta(t)$ is such that $\psi_1(t)/t^{1+\alpha}$ is smooth in $[0,\infty)$ and positive at $0$, using that $\eta$ vanishes to infinite order at $0$. Now apply the previous lemma in the $y$ coordinates.
\end{proof}

\begin{proof}[Proof of Lemma \ref{runge apprx}]
We only indicate the modifications required in the proof of Lemma \ref{lemma_runge}. Now $\ell$ is a bounded linear functional on $W^{1,p}(\Omega_1)$, and hence it is represented by $v \in W^{-1,p'}(\mR^n)$ with $\supp(v) \subset \ol{\Omega}_1$. We now wish to solve the Dirichlet problem 
\[
\Delta w = v|_{\Omega_2} \text{ in $\Omega_2$}, \qquad w|_{\p \Omega_2} = 0.
\]
Since $\Omega_2$ is a $C^{1,\alpha}$ domain and $v|_{\Omega_2} \in W^{-1,p'}(\Omega_2)$, by \cite[Theorem 1.1]{JerisonKenig1995} there is a solution $w \in W^{1,p'}_0(\Omega_2)$ whenever $1 < p < \infty$. As in \eqref{w_vanishing} we obtain $w|_{\Omega_2 \setminus \ol{\Omega}_1} = 0$.

Now for any $u \in S$ one has 
\[
\ell(u) = \langle v, \tilde{u} \rangle_{\mR^n}
\]
where $\tilde{u}$ is any function in $W^{1,p}(\mR^n)$ with $\tilde{u}|_{\Omega_1} = u$. If $p < 1+2/\alpha$ and we choose $\tilde{u}$ to be the extension given in Corollary \ref{extension out of general domain}, we have $\supp(\tilde{u}) \subset \ol{\Omega}_2$ and hence we may consider $\tilde{u}$ as an element of $W^{1,p}_0(\Omega_2)$. On the other hand, the facts that $w \in W^{1,p'}_0(\Omega_2)$ and $w|_{\Omega_2 \setminus \ol{\Omega}_1} = 0$ imply that there is a sequence $w_j \in C^{\infty}_c(\Omega_1)$ with $w_j \to w$ in $W^{1,p'}(\Omega_2)$. It follows that
\[
\ell(u) = \langle v|_{\Omega_2} , \tilde{u} \rangle_{\Omega_2} =  \langle \Delta w, \tilde{u} \rangle_{\Omega_2} = \lim \,\langle \Delta w_j, \tilde{u} \rangle_{\Omega_2} = \lim \,\langle w_j, \Delta \tilde{u} \rangle_{\Omega_2}.
\]
Since $u \in S$ we have $\Delta \tilde{u} = 0$ in $\Omega_1$, and thus the last expression vanishes using that $w_j \in C^{\infty}_c(\Omega_1)$. We have shown that $\ell|_S = 0$, which concludes the proof.
\end{proof}

It now remains to complete Step 3 in the proof of Theorem \ref{thm_linearized_lone} for $f \in L^1(\Omega)$. We follow the argument of \cite[Section 2]{ferreira2009linearized} with minor modifications. Let $x_1\in \Omega$ and let $\theta: [0,1] \to \overline \Omega$ be a smooth curve so that $\theta(0)$ is the only point of $\theta([0,1])$ on $\partial \Omega$ and $\theta'(0)$ is normal to $\p \Omega$. Define 
\[
\Theta_\eps(t) :=\{x\in \overline\Omega \,:\, d(x,\theta([0,t]))<\eps \}.
\]
Thanks to Step 2, there exists $\eps>0$ such that $f= 0$ in $\Theta_\eps(0) \cap \Omega$. We may further decrease $\eps$ so that $U :=\Theta_\eps(1) \cap \partial \Omega$ is a connected small open neighbourhood on $\partial \Omega$ containing $x_0$. Locally near $x_0$ we may work in coordinates so that $x_0=0$, $\Omega = \{ x_n < 0 \}$ near $x_0$, and $\ol{B_{\delta}(0)} \cap \{ x_n = 0 \} \subset U$. We also ask that $\delta>0$ is small enough such that $\partial\Omega\setminus \Gamma \subset \subset \partial \Omega \setminus B_{\delta}(0)$. Choose $\alpha < \frac{2}{n-1}$, so that $1+2/\alpha > n$, and in the previous coordinates choose 
\[
\Omega_2 := \Omega\cup C_{\alpha}
\]
where $C_{\alpha}$ is a set in $\{ x_n \geq 0 \}$ so that $\Omega_2$ will have a $C^{1,\alpha}$ edge singularity along $\p B_{\delta}(0) \cap \{ x_n = 0 \}$.

For $\eps>0$ fixed above, define 
\[
I := \{t\in [0,1] \,:\, f = 0 \text{ a.e.\ in } \Theta_\eps(t) \cap \Omega\}.
\]
This is clearly a nonempty set which is closed. We now need to show that it is open.

To this end, suppose $t_0\in I \cap (0,1)$. By our choice of $\theta(\cdot)$ and $\eps$, there is an open set $\Omega_1\subset \Omega$ with smooth boundary so that $\partial\Omega\setminus B_{\delta}(0) = \partial\Omega \cap \partial\Omega_1$,  $\partial (\partial\Omega\cap\partial \Omega_1) = \p B_\delta(0) \cap \{x_n = 0 \}$, and so that near any point of the edge $\p (\p \Omega_1 \cap \p \Omega_2) = \p B_{\delta}(0) \cap \{ x_n = 0 \}$ the sets $\Omega_1$ and $\Omega_2$ satisfy the conditions in Lemma \ref{runge apprx}. We also ask that $\Omega \setminus \Theta_\eps(t_0) \subset \Omega_1 \subset \Omega \setminus \theta([0,t_0])$, that $\Omega \setminus \ol{\Omega}_1$ is connected, and that $\partial B_\eps(\theta(t_0))\cap \partial \Theta_\eps(t_0) \subset \partial\Omega_1$. 

Let $G(x,y)$ be the Dirichlet Green's function associated to the domain $\Omega_2$. Consider the expression 
\[
\int_{\Omega_1} f(y) G(x,y) G(t,y) \,dy
\]
as a function of both $x,t \in \Omega_2\setminus \overline \Omega_1$. Since $f=0$ in $\Omega\setminus \Omega_1$, we have that 
\[
\int_{\Omega_1} f(y) G(x,y) G(t,y)\,dy = \int_{\Omega} f(y) G(x,y) G(t,y)\,dy.
\]
For $x,t\in \Omega_2\setminus \ol{\Omega}$, $y\mapsto G(x,y)$ and $y\mapsto G(t,y)$ are harmonic functions in $\Omega$ which vanish on $\partial\Omega_1 \cap \partial\Omega_2 \supset \partial\Omega\setminus \Gamma$. So by our assumption that $f$ is orthogonal to products of such harmonic functions, we have 
\[
0 = \int_{\Omega_1} f(y) G(x,y) G(t,y)\,dy
\]
for $x,t\in \Omega_2\setminus \ol{\Omega}$. By unique continuation,
\[
0 = \int_{\Omega_1} f(y) G(x,y) G(t,y)\,dy
\]
for $x,t\in \Omega_2\setminus \ol{\Omega}_1$. By integrating in $x$ and $t$ against smooth functions supported in $\Omega_2 \setminus \ol{\Omega}_1$ we have that
\[
0 = \int_{\Omega_1}fuv
\]
for all $u$ and $v$ harmonic in $\Omega_1$ of the form $\int_{\Omega_2}G(\cdot, y) a(y) \,dy$ with $\supp(a) \subset \Omega_2 \setminus \ol{\Omega}_1$. By the density result of Lemma \ref{runge apprx} (since $1+2/\alpha > n$, we have density in $W^{1,p}$ for some $p > n$ and hence in $L^{\infty}$), this means that 
\[
0= \int_{\Omega_1} fuv
\]
for all $u$ and $v$ harmonic in $\Omega_1$ and vanishing on $\partial\Omega_1\cap\partial\Omega_2$. By applying the local result in Step 2, we can conclude that $f$ vanishes in an open subset containing $\p B_\eps(\theta(t_0)) \cap \p \Theta_{\eps}(t_0)$. This shows that $t_0$ is an interior point of $I$, showing that $I$ is open and concluding the proof.

\bibliographystyle{alpha}
\bibliography{ref}

\end{document}